\documentclass[12pt, reqno]{amsart}
\makeatletter
\@namedef{subjclassname@1991}{$\mathrm{1991}$ Mathematics Subject Classification}
\@namedef{subjclassname@2000}{$\mathrm{2000}$ Mathematics Subject Classification}
\@namedef{subjclassname@2010}{$\mathrm{2010}$ Mathematics Subject Classification}
\@namedef{subjclassname@2020}{$\mathrm{2020}$ Mathematics Subject Classification}
\makeatother
\usepackage{amsmath,amsthm, amscd, amsfonts, amssymb, graphicx, color}
\usepackage[bookmarksnumbered, colorlinks, plainpages,linkcolor=blue,urlcolor=blue,citecolor=blue]{hyperref}
\newtheorem{thm}{Theorem}[section]
\newtheorem{cor}[thm]{Corollary}
\newtheorem{lem}[thm]{Lemma}

\newtheorem{defn}[thm]{Definition}

\newtheorem{exam}[thm]{Example}
\numberwithin{equation}{section}

\begin{document}

\title{generalized $[v,w]$-core inverse in Banach *-algebras}

\author{Huanyin Chen}
\address{School of Big Data, Fuzhou University of International Studies and Trade, Fuzhou 350202, China}
\email{<huanyinchenfz@163.com>}

\subjclass[2020]{16U90, 15A09, 46H05.} \keywords{core inverse; weighted core inverse; generalized weighted core inverse; core-EP inverse; weighted g-Drazin inverse; Banach algebra.}

\begin{abstract} We introduce the generalized weighted core inverse with respect to two elements in a Banach *-algebra, extending the framework of existing generalized inverses. This new generalized inverse is characterized via the generalized weighted core decomposition, and its representations are established using the weighted g-Drazin inverse. We investigate the polar-like and principal ideal properties to reveal its structural behavior. Consequently, applications to complex matrices are derived. This finds encompasses both the core-EP and generalized core inverses, significantly broadening the theoretical framework.\end{abstract}

\maketitle

\section{Introduction}

A Banach algebra $\mathcal{A}$ is called a Banach *-algebra if there exists an involution $*: x\to x^*$ satisfying $(x+y)^*=x^*+y^*,
(\lambda x)^*=\overline{\lambda} x^*, (xy)^*=y^*x^*, (x^*)^*=x$. An element $a$ in a Banach *-algebra $\mathcal{A}$ has core inverse if there exists an element
$x\in \mathcal{A}$ such that $$ax^2=x, (ax)^*=ax, xa^2=a.$$ If such $x$ exists, it is unique, and denote it by $a^{\tiny\textcircled{\#}}$ (see~\cite{CZ2}).

Gao and Chen extended the concept of the core inverse and introduced the notion of core-EP inverse (i.e., pseudo core inverse) (see~\cite{GC}). An element $a\in \mathcal{A}$ has core-EP inverse if there exist $x\in \mathcal{A}$ and $k\in \Bbb{N}$ such that $$ax^2=x, (ax)^*=ax, xa^{k+1}=a^k.$$ If such $x$ exists, it is unique, and denote it by $a^{\tiny\textcircled{D}}$. The core and core-EP inverses have been studied by many authors, e.g., ~\cite{CCZ,GC1,M3,M5,R,XS,Z}.

To extend core and core-EP inverses to the general case, $w$-core and
$w$-core-EP inverses were introduced in ~\cite{M6,ZW}. For further research on these weighted generalized inverses, we refer the reader to ~\cite{B,B1,D,JZ,MD,M4,M6,W,Y,ZC}.

More recently, Mary et al. generalized the $w$-core-EP inverse and introduced a new type of generalized inverse. An element $a\in R$ has $(v,w)$-pseudo-core inverse if there exists an element $x\in R$ such that $$awxvx=x, (vawx)^*=vawx,xv(aw)^{k+1}a=(aw)^ka$$ for some $k\in {\Bbb N}$. If such $x$ exists, it is unique, and denote it by $a_{[v,w]}^{\tiny\textcircled{D}}$. Let $\mathcal{A}_{[v,w]}^{\tiny\textcircled{D}}$ denote the set of all $(v,w)$-pseodu-core invertible elements in $\mathcal{A}$ (see~\cite{M}).

To extend the $w$-core inverse to a more general setting, we introduce the following new concept:

\begin{defn} Let $a,w\in \mathcal{A}$ and $v\in \mathcal{A}^{-1}$. An element $a\in \mathcal{A}$ has $[v,w]$-core inverse if there exists $x\in \mathcal{A}$ such that
$$awxvx=x, xvawa=a, (vawx)^*=vawx.$$\end{defn}

Let $a_{[v,w]}^{\tiny\textcircled{\#}}$ denote the preceding $x$, and call it the $[v,w]$-core inverse of $a$.
Obviously, the concept of the $[v,w]$-core inverse naturally extends the known weighted core inverse, encompassing both the
$w$-core inverse and the weighted $w$-core inverse with the weight $v$ as special cases (see~\cite{W,ZW}).

Let $\mathcal{A}_{[v,w]}^{\tiny\textcircled{\#}}$ denote the set of all $[v,w]$-core invertible element.
Let $$\mathcal{A}^{qnil}=\{x\in \mathcal{A}~\mid~ \lim\limits_{n\to \infty}\parallel x^n\parallel^{\frac{1}{n}}=0\}.$$ As it is well known, $x$ is a quasinilpotent (i.e., $x\in \mathcal{A}^{qnil}$) if and only if $1+\lambda x\in \mathcal{A}$ is invertible for any $\lambda\in {\Bbb C}$. Set $\mathcal{A}_w^{qnil}=\{x\in \mathcal{A}~\mid~ xw\in \mathcal{A}^{qnil}\}.$

Recently, the authors studied various generalized inverses with respect to a weight using a new limit-based approach in a Banach
algebra (see~\cite{CM1,CM2,CM3,CM4}). This inspires us to extend the study of generalized inverses to a broader context with a pair of weights.
The aim of this paper is to investigate when a Banach element can be expressed as the sum of a $[v,w]$-core invertible element and a quasinilpotent.
This leads to a new kind of generalized inverse, which is exactly a natural generalization of the generalized inverses mentioned above.

In Section 2, we introduce generalized weighted core inverse in terms of a new kind of generalized weighted core decomposition.
Many new properties of the $w$-core inverse and $w$-core-EP inverse are thereby obtained. We adopt:

\begin{defn} An element $a\in \mathcal{A}$ has generalized $[v,w]$-core decomposition if there exist $x,y\in \mathcal{A}$ such that $$a=x+y, ywx=0, x\in \mathcal{A}_{[v,w]}^{\tiny\textcircled{\#}}, y\in \mathcal{A}_w^{qnil}.$$\end{defn}

Let $a,w\in \mathcal{A}$ and $v\in \mathcal{A}^{-1}$. We prove that $a\in \mathcal{A}$ has generalized $[v,w]$-core decomposition if and only if there exists $x\in \mathcal{A}$ such that
$$x=awxvx, (vawx)^*=vawx, \lim\limits_{n\to \infty}||(aw)^n-xv(aw)^{n+1}||^{\frac{1}{n}}=0.$$

Recall that an element $a\in \mathcal{A}$ possesses the generalized
$w$-Drazin inverse $x$ if $x$ is the unique element in
$\mathcal{A}$ satisfying $awx=xwa, xwawx=x ~\mbox{and}~ a-awxwa\in \mathcal{A}^{qnil}.$
We denote this unique element by $a^{d,w}$ (see~\cite{M1}). Let $a\in \mathcal{A}^{d,w}$. In Section 3, we characterize the generalized $[v,w]$-core inverse by its polar-like and principal ideal properties. We prove that $a\in \mathcal{A}_{[v,w]}^{\tiny\textcircled{d}}$ if and only if there exists an idempotent $p\in \mathcal{A}$ such that $aw+p\in \mathcal{A}^{-1}, (v^2p)^*=v^2p, p(aw)=p(aw)p\in \mathcal{A}^{qnil}$, if and only if there exists $x\in \mathcal{A}$ such that $awxvx=x, x\mathcal{A}=a^{d,w}\mathcal{A}, \mathcal{A}x=\mathcal{A}(va^{d,w})^*.$

In Section 4, we establish the representation of the $[v,w]$-core-EP inverse through weighted generalized Drazin inverse. We prove that $a\in R_{[v,w]}^{\tiny\textcircled{d}}$ if and only if $a^{d,w}\in R_{[v,w]}^{\tiny\textcircled{\#}}$ and $a_{[v,w]}^{\tiny\textcircled{d}}=[a^{d,w}w]^2(a^{d,w})_{(v,w)}^{\tiny\textcircled{\#}}.$

Finally, Section 5 is devoted to applications in the complex setting. We derive new explicit expressions for the weighted core-EP inverse involving two weighted matrices.

Throughout the paper, $\mathcal{A}$ denotes the Banach *-algebras with an identity, $w\in \mathcal{A}$ and
$v\in \mathcal{A}^{-1}$. Here, $\mathcal{A}^{-1}$ denotes the set of all invertible elements in $\mathcal{A}$.
We use $\mathcal{A}^{d,w}$ and $\mathcal{A}_w^{\tiny\textcircled{\#}}$ to stand the sets of
all weighted g-Drazin and $w$-core invertible elements in $\mathcal{A}$, respectively.
Let ${\Bbb C}^{n\times n}$ be the Banach algebra of all $n\times n$ complex matrices with conjugate transpose $*$.

\section{generalized $[v,w]$-core decomposition}

The aim of this section is to introduce the notion of the generalized $[v,w]$-core inverse in a Banach *-algebra. We begin with

\begin{lem} Let $a,w\in \mathcal{A}$ and $v\in \mathcal{A}^{-1}$. Then the following are equivalent:\end{lem}
\begin{enumerate}
\item [(1)] $a\in \mathcal{A}_{[v,w]}^{\tiny\textcircled{\#}}$.
\vspace{-.5mm}
\item [(2)] There exists a unique $x\in \mathcal{A}$ such that
$$awxvx=x, xvawa=a, (vawx)^*=vawx.$$
\end{enumerate}
\begin{proof} $(1)\Rightarrow (2)$ Suppose that there exist $x,y\in \mathcal{A}$ such that
$$\begin{array}{c}
awxvx=x, xvawa=a, (vawx)^*=vawx;\\
awyvy=y, yvawa=a, (vawy)^*=vawy.
\end{array}$$ Then we have $$\begin{array}{rll}
awxva&=&awxv[xvawa]=[awxvx][vawa]=xvawa=a,\\
xvawx&=&xvaw[awxvx]=[xvawa][wxvx]=awxvx=x.
\end{array}$$ Likewise, we have
$awyva=a, yvawy=y$. Therefore $$\begin{array}{rll}
x&=&xvawx=xv[awyva]wx=xvaw[awyvy]vawx=[xvawa]wyvyvawx\\
&=&[awyvy]vawx=yvawx=(yvawy)vawx=y[vawy]vawx\\
&=&y[vawy]^*vawx=y(wy)^*a^*v^*vawx=y(wy)^*a^*v^*(vawx)^*\\
&=&=y(wy)^*(awxva)^*v^*=y(wy)^*a^*v^*\\
&=&y(vawy)^*=yvawy=y,
\end{array}$$ as required.

$(2)\Rightarrow (1)$ This is trivial.\end{proof}

\begin{thm} Let $a,w\in \mathcal{A}$ and $v\in \mathcal{A}^{-1}$. Then the following are equivalent:\end{thm}
\begin{enumerate}
\item [(1)] $a\in \mathcal{A}$ has generalized $[v,w]$-core decomposition.
\vspace{-.5mm}
\item [(2)] There exists a pair $x,y\in \mathcal{A}$ such that $$a=x+y, (vx)^*vy=0, ywx=0, x\in \mathcal{A}_{[v,w]}^{\tiny\textcircled{\#}}, y\in \mathcal{A}_w^{qnil}.$$
\vspace{-.5mm}
\item [(3)] There exists $x\in \mathcal{A}$ such that $$\begin{array}{c}
x=awxvx, (vawx)^*=vawx, \lim\limits_{n\to \infty}||(aw)^n-(xv)(aw)^{n+1}||^{\frac{1}{n}}=0.
\end{array}$$
\end{enumerate}
\begin{proof} $(1)\Rightarrow (3)$ By hypothesis, there exist $z,y\in \mathcal{A}$ such that $$a=z+y, ywz=0, z\in
\mathcal{A}_{[v,w]}^{\tiny\textcircled{\#}}, y\in \mathcal{A}_w^{qnil}.$$ Set $x=z_{[v,w]}^{\tiny\textcircled{\#}}$. Then $$\begin{array}{rll}
awx&=&(z+y)wz_{[v,w]}^{\tiny\textcircled{\#}}=zwz_{[v,w]}^{\tiny\textcircled{\#}},\\
vawx&=&vzwz_{[v,w]}^{\tiny\textcircled{\#}},\\
(vawx)^*&=&(vzwz_{[v,w]}^{\tiny\textcircled{\#}})^*=vzwz_{[v,w]}^{\tiny\textcircled{\#}}=vawx,\\
awxvx&=&(awx)(vx)=[zwz_{[v,w]}^{\tiny\textcircled{\#}}]vz_{[v,w]}^{\tiny\textcircled{\#}}=z_{[v,w]}^{\tiny\textcircled{\#}}=x,\\
xvawx&=&z_{[v,w]}^{\tiny\textcircled{\#}}vzwz_{[v,w]}^{\tiny\textcircled{\#}}=zwz_{[v,w]}^{\tiny\textcircled{\#}}=x.
\end{array}$$

As $ywz=0$, we see that $$\begin{array}{rl}
&[aw-xv(aw)^2]zw\\
=&[zw+yw-z^{e,\tiny\textcircled{\#}}v(zw+yw)^2]zw\\
=&[zw+yw-z^{e,\tiny\textcircled{\#}}v\big((zw)^2+zwyw+ywzw+(yw)^2\big)]zw\\
=&(zw)^2+(ywzw)-z^{e,\tiny\textcircled{\#}}v[(zw)^3+zw(ywz)w+(ywz)wzw+yw(ywz)w]\\
=&[z-z^{e,\tiny\textcircled{\#}}vzwz]wzw\\
=&[zwz-z^{e,\tiny\textcircled{\#}}v(zw)^2z]w=0.
\end{array}$$
Thus we have
$$\begin{array}{rll}
||(aw)^n-xv(aw)^{n+1}||^{\frac{1}{n}}&=&||[aw-xv(aw^2)](aw)^{n-1}||^{\frac{1}{n}}\\
&=&||[aw-xv(aw)^2][zw+yw]^{n-1}||^{\frac{1}{n}}\\
&=&||[aw-xv(aw)^2](yw)^{n-1}||^{\frac{1}{n}}\\
&\leq&||aw-xw(aw)^2||^{\frac{1}{n}}[||(yw)^{n-1}||^{\frac{1}{n-1}}]^{1-\frac{1}{n}}.
\end{array}$$
Since $y\in \mathcal{A}_w^{qnil}$, we deduce that $$\lim_{n\to \infty}||(aw)^n-xv(aw)^{n+1}||^{\frac{1}{n}}=0,$$ as required.

$(3)\Rightarrow (2)$ By hypotheses, there exists $x\in \mathcal{A}$ such that $$x=awxvx, (vawx)^*=vawx, \lim\limits_{n\to \infty}||(aw)^n-xv(aw)^{n+1}||^{\frac{1}{n}}=0.$$ Then $xvawx=x$ and
$[awxv(aw)^2]x=(aw)^2x.$ Moreover, we have $$vaw(xvaw)x=v(aw)x.$$ Set $z=awxva$ and $y=a-awxva$. We verify that
$$\begin{array}{rll}
ywz&=&(a-awxva)wawxva=(aw)^2xva-[awxv(aw)^2]xva\\
&=&(aw)^2xva-(aw)^2xva=0,\\
(vz)^*vy&=&(vawxva)^*v(a-awxva)\\
&=&(va)^*vawxv(a-awxva)=(va)^*[vawxva-vaw(xvaw)xva]\\
&=&(va)^*[vawxva-vawxva]=0.
\end{array}$$

We claim that $z\in \mathcal{A}_{[v,w]}^{\tiny\textcircled{\#}}$ and $z_{[v,w]}^{\tiny\textcircled{\#}}=x$.

Claim 1. $zwxvx=x$.

$$\begin{array}{rll}
zwxvx&=&awxv(awxvx) \\
&=&awxvx=x.
\end{array}$$

Claim 2. $xvzwx=x.$

$$\begin{array}{rll}
xvzwx&=&xvaw(xvawx)\\
&=&xvawx=x.
\end{array}$$

Claim 3. $xvzwz=z.$
Since $zwz\in x\mathcal{A}$, we deduce that
$$\begin{array}{rll}
xvzwz&=&(xvawx)vawawxva\\
&=&[xv(aw)^2]xva=awxva=z.
\end{array}$$

Claim 5. $(vzwx)^*=vzwx.$

$$\begin{array}{rll}
vzwx&=&vaw(xvawx)=vawx,\\
(vzwx)^*&=&(vawx)^*=vawx=vzwx.
\end{array}$$

Therefore $z\in \mathcal{A}_{[v,w]}^{\tiny\textcircled{\#}}$ and $z_{[v,w]}^{\tiny\textcircled{\#}}=x$. Moreover, we see that
$$\begin{array}{rll}
||(aw)^n-awxv(aw)^{n}||&=&||[aw-awxvaw](aw)^{n-1}||\\
&=&||yw(aw)^{n-1}||=||ywaw(aw)^{n-2}||\\
&=&||yw(z+y)w(aw)^{n-2}||=||(yw)^2(aw)^{n-2}||\\
&=&\cdots =||(yw)^n||.
\end{array}$$ Therefore $$\lim\limits_{n\to \infty}||(yw)^n||^{\frac{1}{n}}=0,$$
and then $y\in \mathcal{A}_w^{qnil}$, as required.

$(2)\Rightarrow (1)$ This is trivial.\end{proof}

\begin{cor} Let $a,w\in \mathcal{A}$ and $v\in \mathcal{A}^{-1}$. Then the following are equivalent:\end{cor}
\begin{enumerate}
\item [(1)] $a\in \mathcal{A}$ has generalized $[v,w]$-core decomposition.
\vspace{-.5mm}
\item [(2)] There exists a unique pair $x,y\in \mathcal{A}$ such that $$a=x+y, (vx)^*vy=0, ywx=0, x\in \mathcal{A}_{[v,w]}^{\tiny\textcircled{\#}}, y\in \mathcal{A}_w^{qnil}.$$
\end{enumerate}
\begin{proof} $(1)\Rightarrow (2)$ In view of Theorem 2.2, there exists a pair $x,y\in \mathcal{A}$ such that $$a=x+y, (vx)^*vy=ywx=0, x\in
\mathcal{A}_{[v,w]}^{\tiny\textcircled{\#}}, y\in \mathcal{A}_w^{qnil}.$$ Suppose that
there exist $b,c\in \mathcal{A}$ such that $$a=b+c, (vb)^*vc=cwb=0, b\in
\mathcal{A}_{[v,w]}^{\tiny\textcircled{\#}}, c\in \mathcal{A}_w^{qnil}.$$

Claim 1. Set $z=x_{[v,w]}^{\tiny\textcircled{\#}}$ and $d=b_{[v,w]}^{\tiny\textcircled{\#}}$. As in the proof of Theorem 2.2, we have
$$\begin{array}{c}
z=awzvz, (vawz)^*=vawz, zvawz=z,\\
\lim\limits_{n\to \infty}||(aw)^n-(zv)(aw)^{n+1}||^{\frac{1}{n}}=0;\\
d=awdvd, (vawd)^*=vawd, dvawd=d,\\
\lim\limits_{n\to \infty}||(aw)^n-(dv)(aw)^{n+1}||^{\frac{1}{n}}=0.
\end{array}$$

Claim 2. $awz=awd$. For any $n\in {\Bbb N}$, we have $z=awzvz=(aw)^2z(vz)^2=\cdots =(aw)^{n-1}z(vz)^{n-1}$.
Then we get $$\begin{array}{rll}
awz-awzvawz&=&(aw)^nz(vz)^{n-1}-awzv(aw)^{n}z(vz)^{n-1}\\
&=&[(aw)^n-awzv(aw)^n]z(vz)^{n-1}.
\end{array}$$
Hence $\lim\limits_{n\to \infty}||awz-awzvawz||^{\frac{1}{n}}=0.$ Then $awz=awzvawz.$ Likewise,
we have $awz=awdvawz.$ Moreover, we verify that $aw(dvawd)vawz=awdv(awdvaw)z=awdvawz$, and so
$awdvawz=aw(dvawd)vawz$ Thus $awd=awzvawd.$ Similarly, we have $(awdvaw)d=awd.$

We observe that
$$\begin{array}{rll}
awz&=&aw(zvawz)=awzv(aw)z=awzv(awdva)wz\\
&=&awzvaw(awdvd)vawz=(awzvawaw)dvdvawz\\
&=&aw(awdvd)(vawz)=awdvawz=aw(dvawd)vawz\\
&=&awd(vawd)^*vawz=awd(vawzvawd)^*vawz\\
&=&awd(d^*w^*a^*v^*)(vawz)^*vawz=awdd^*w^*a^*v^*(vawz)vawz\\
&=&awdd^*w^*a^*v^*(vawzvawz)=awdd^*w^*a^*v^*vawz\\
&=&awd(d^*w^*a^*v^*)(vawz)^*=awd(vawzvawd)^*\\
&=&awd(vawd)^*=awd(vawd)=awdvawd=(awdvaw)d=awd.
\end{array}$$

Claim 3. $x=b$. We prove that $$\begin{array}{rll}
vawzvy&=&v(x+y)wzvy=vxwzvy=(vxwz)^*vy=(wz)^*\big((vx)^*vy\big)=0,\\
vawzva&=&vawzv(x+y)=vawzvx=v(x+y)wzvx=vxwzvx=vx.\end{array}$$ Hence,
$vx=vawzva$. Likewise, $vb=vawdva$. By Claim 2, we see that
$vx=v(awz)(va)=v(awd)(va)=vb$. Since $v\in \mathcal{A}^{-1}$, we see that $x=b$. Therefore $c=a-b=a-x=y$, as required.

$(2)\Rightarrow (1)$ This is trivial.\end{proof}

\begin{thm} Let $a,w\in \mathcal{A}$ and $v\in \mathcal{A}^{-1}$. Then the following are equivalent:\end{thm}
\begin{enumerate}
\item [(1)] $a\in \mathcal{A}$ has generalized $[v,w]$-core decomposition.
\vspace{-.5mm}
\item [(2)] There exists unique $x\in \mathcal{A}$ such that $$\begin{array}{c}
x=awxvx, (vawx)^*=vawx, \lim\limits_{n\to \infty}||(aw)^n-xv(aw)^{n+1}||^{\frac{1}{n}}=0.
\end{array}$$
\end{enumerate}
\begin{proof} $(2)\Rightarrow (1)$ This is obvious by Theorem 2.2.

$(1)\Rightarrow (2)$ By hypothesis, there exist $x,y\in \mathcal{A}$ such that $$\begin{array}{c}
x=awxvx, (vawx)^*=vawx, \lim\limits_{n\to \infty}||(aw)^n-(xv)(aw)^{n+1}||^{\frac{1}{n}}=0,\\
y=awyvy, (vawy)^*=vawy, \lim\limits_{n\to \infty}||(aw)^n-(yv)(aw)^{n+1}||^{\frac{1}{n}}=0.
\end{array}$$

Set $z=(aw)(xv)a, y=a-(aw)(xv)a; b=(aw)(yv)a$ and $c=a-(aw)(yv)a$. As in the proof of Theorem 2.2, we have generalized $[v,w]$-core decomposition
$$\begin{array}{c}
a=z+y, (vz)^*vy=ywz=0, z\in \mathcal{A}_{[v,w]}^{\tiny\textcircled{\#}}, y\in \mathcal{A}_w^{qnil};\\
a=b+c, (vb)^*vc=cwb=0, b\in \mathcal{A}_{[v,w]}^{\tiny\textcircled{\#}}, c\in \mathcal{A}_w^{qnil}.
\end{array}$$

In view of Corollary 2.3, we have $z=b$.
As in the proof of Theorem 2.2, we get
$$x=z_{[v,w]}^{\tiny\textcircled{\#}}=b_{[v,w]}^{\tiny\textcircled{\#}}=y.$$ Accordingly, $x=y$, the result follows.\end{proof}

We denote the unique $x$ described above by $a_{[v,w]}^{\tiny\textcircled{d}}$, and call it the generalized $[v,w]$-core inverse of $a$.

\begin{cor} Let $a=x+y$ be the generalized $[v,w]$-core decomposition of $a\in \mathcal{A}$. Then $a_{[v,w]}^{\tiny\textcircled{d}}=x_{[v,w]}^{\tiny\textcircled{\#}}$.\end{cor}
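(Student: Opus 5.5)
The plan is to check directly that $z:=x_{[v,w]}^{\tiny\textcircled{\#}}$ satisfies the three conditions in Theorem 2.4(2) for $a$, and then invoke the uniqueness in that theorem. This is the computation in $(1)\Rightarrow(3)$ of Theorem 2.2, where the notation $z^{e,\tiny\textcircled{\#}}$ should be read as $x_{[v,w]}^{\tiny\textcircled{\#}}$. I will redo it with the roles of the letters fixed. By definition of the $[v,w]$-core inverse of $x$,
$$xwzvz=z,\quad zvxwx=x,\quad (vxwz)^*=vxwz.$$

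The first step is to show $awz=xwz$. Since $z=xwzvz\in x\mathcal{A}$ and $ywx=0$, we get $ywz=ywxwzvz=0$. Hence $awz=(x+y)wz=xwz$. This gives the first two conditions immediately:
$$awzvz=xwzvz=z,\qquad vawz=vxwz,$$
and $vxwz$ is self-adjoint.

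The main work is the limit condition. I would first prove the annihilation identity
$$[aw-zv(aw)^2]\,xw=0.$$
Using $awx=xwx+ywx=xwx$ twice gives $(aw)^2xw=aw\,xwxw=xwxwxw$. Then $zv(aw)^2xw=(zvxwx)wxw=xwxw$, which equals $awxw$, so the difference vanishes.

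Next, expand $(aw)^{n-1}=(xw+yw)^{n-1}$ into words in $xw$ and $yw$. Any word containing a factor $xw$ either begins with $xw$, in which case it is killed on the left by the identity above, or contains the product $yw\cdot xw=(ywx)w=0$. Therefore
$$(aw)^n-zv(aw)^{n+1}=[aw-zv(aw)^2](yw)^{n-1}.$$
This yields the estimate
$$\|(aw)^n-zv(aw)^{n+1}\|^{\frac1n}\le \|aw-zv(aw)^2\|^{\frac1n}\Big(\|(yw)^{n-1}\|^{\frac1{n-1}}\Big)^{1-\frac1n},$$
which tends to $0$ because $yw\in\mathcal{A}^{qnil}$.

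The only delicate point is the word-expansion argument. One must make sure that every mixed word contains either a leading $xw$ or an adjacent pair $yw\,xw$; this holds because the leftmost occurrence of $xw$ is either in the first position or immediately preceded by $yw$.

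Having verified all three conditions for $z$, the uniqueness part of Theorem 2.4 forces $a_{[v,w]}^{\tiny\textcircled{d}}=z=x_{[v,w]}^{\tiny\textcircled{\#}}$.
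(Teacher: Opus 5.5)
Your proof is correct and is essentially the paper's argument: the paper refers back to the computation in $(1)\Rightarrow(3)$ of Theorem 2.2, which shows that the $[v,w]$-core inverse $z$ of the core part $x$ satisfies the three conditions of Theorem 2.4(2), and then uses the uniqueness there, exactly as you do. Your write-up is more careful than the paper, since it justifies $ywz=0$ via $z\in x\mathcal{A}$ and the reduction $[aw-zv(aw)^2](aw)^{n-1}=[aw-zv(aw)^2](yw)^{n-1}$, both of which the paper uses without comment.
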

\begin{proof} Let $a=x+y$ be the generalized $[v,w]$-core decomposition of $a\in \mathcal{A}$. Similarly to the proof of Theorem 2.2,
$x_{[v,w]}^{\tiny\textcircled{\#}}$ is the generalized $[v,w]$-core inverse of $a$. So the theorem is true.\end{proof}

\begin{thm} Let $a\in \mathcal{A}_{[v,w]}^{\tiny\textcircled{d}}$ and $b\in \mathcal{A}_w^{qnil}$. If $(va)^*vb=0$ and $bwa=0$, then $a+b\in \mathcal{A}_{[v,w]}^{\tiny\textcircled{d}}$. In this case, $$(a+b)_{[v,w]}^{\tiny\textcircled{d}}=a_{[v,w]}^{\tiny\textcircled{d}}.$$\end{thm}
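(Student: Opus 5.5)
Let $x=a_{[v,w]}^{\tiny\textcircled{d}}$. By Theorem 2.4 (characterization (2)), it suffices to show that
$$x=(a+b)wxvx,\quad (v(a+b)wx)^*=v(a+b)wx,\quad \lim_{n\to\infty}\|((a+b)w)^n-xv((a+b)w)^{n+1}\|^{\frac1n}=0.$$
Uniqueness in Theorem 2.4 then gives $(a+b)_{[v,w]}^{\tiny\textcircled{d}}=x$. The approach is to show that $b$ is annihilated wherever it meets $x$, so that the first two identities reduce to the ones for $a$. The limit condition needs a separate estimate.

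\textbf{Step 1: $bwx=0$.} Since $x=awxvx$, we have $x\in aw\mathcal{A}$, say $x=awt$ with $t=xvx$. Then $bwx=(bwa)wt=0$. Consequently $(a+b)wx=awx$, and the first two conditions follow immediately from those for $a$.

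\textbf{Step 2: $xvb=0$.} I would use the hypothesis $(va)^*vb=0$. Since $vawx$ is self-adjoint, $xvb=xv(awxvx)^{\dots}$ should be rewritten through $x=xvawx$. Here $xvawx=x$ follows from $x=awxvx$, exactly as in the proof of Theorem 2.2. Then
$$xvb=xv(awx)\cdots,$$
but it is better to write
$$xvb=(xvawx)vb=xv^{-1}\cdots.$$
Concretely, I would aim for
$$vawxvb=(vawx)^*vb=x^*w^*(va)^*vb=0.$$
Multiplying on the left by $xv^{-1}$ turns this into $xvawxvb=xvb$, hence $xvb=0$. The identity $x=xvawx$ is what makes this work.

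\textbf{Step 3: the limit.} Put $e=xvaw$. From Steps 1 and 2, $xvbw=0$, so $e=xv(a+b)w$ and
$$xv((a+b)w)^{n+1}=xv(a+b)w\,((a+b)w)^n=e\,((a+b)w)^n.$$
So I need $\|(1-e)((a+b)w)^n\|^{1/n}\to0$. I would split $((a+b)w)^n$ using $bwa=0$, which gives $bw\cdot aw=0$. Every word in $aw,bw$ therefore has the form $(aw)^i(bw)^j$, and
$$((a+b)w)^n=\sum_{i+j=n}(aw)^i(bw)^j.$$
Applying $1-e$ and using $\|(1-e)(aw)^i\|\le C_\varepsilon\varepsilon^i$ (from the limit for $a$, together with $(1-e)(aw)^i=(aw)^i-xv(aw)^{i+1}$) and $\|(bw)^j\|\le C_\varepsilon\varepsilon^j$ (quasinilpotence of $bw$), the sum is bounded by $(n+1)C_\varepsilon^2\varepsilon^n$. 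The term $i=0$ also fits this bound since $\|1-e\|$ is a constant. Taking $n$-th roots and letting $\varepsilon\to0$ gives the limit.

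The main obstacle is Step 3, specifically justifying the expansion $\sum_i(aw)^i(bw)^j$. This expansion relies only on $bwaw=0$, so it is fine, and the estimate is then routine. If there were trouble, I would instead use the decomposition from Theorem 2.2, $a=z+y$, and show that $a+b=z+(y+b)$ is again a generalized $[v,w]$-core decomposition. This would require showing $(y+b)w\in\mathcal{A}^{qnil}$ via $bwyw=0$, which follows from $bwa=0$ and $bwz=0$, together with the standard fact that the sum of quasinilpotents is quasinilpotent when one product vanishes.
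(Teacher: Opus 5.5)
Your argument is correct, but it takes a different route from the paper; your fallback is the paper's proof. The paper works with the decomposition. With $x$ the generalized $[v,w]$-core inverse of $a$, it sets $z=awxva$ and $y=a-z$, notes $bwy=(bwa)(1-wxva)=0$, and imports \cite[Lemma 2.4]{CK} to get $(y+b)w\in\mathcal{A}^{qnil}$. It then checks $(vz)^*v(y+b)=(vz)^*vy+(wxva)^*(va)^*vb=0$ and concludes by Theorem 2.2 and Corollary 2.5 that $a+b=z+(y+b)$ is again a generalized $[v,w]$-core decomposition with the same core part, so the inverse is unchanged. You instead verify the three conditions of Theorem 2.4 for $x$ and $a+b$ directly. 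Your expansion $((a+b)w)^n=\sum_{i+j=n}(aw)^i(bw)^j$, together with the $\varepsilon$-bounds, in effect reproves the case of the quasinilpotent-sum lemma that the paper cites. So your proof is self-contained and shows exactly where each hypothesis enters. The paper's proof is shorter given its machinery, and it yields the structural fact that $a+b$ has the same core part as $a$.

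Two small repairs are needed. First, in Step 2 you must multiply $vawxvb=0$ on the left by $x$, not by $xv^{-1}$, which would give $xawxvb$. Second, $xvawx=x$ does not follow from $x=awxvx$ alone; it needs the limit condition, via $x-xvawx=[(aw)^n-xv(aw)^{n+1}]\,x(vx)^n$ and taking $n$-th roots.

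Step 2 is in fact dispensable. Since $bwaw=0$, you have $bw((a+b)w)^n=(bw)^{n+1}$, so $xv((a+b)w)^{n+1}=e((a+b)w)^n+xv(bw)^{n+1}$, and the extra term has $n$-th root norm tending to $0$. Thus your route never uses $(va)^*vb=0$. In the paper's route too, that hypothesis only serves the orthogonality condition of Theorem 2.2(2), which Definition 1.2 and Corollary 2.5 do not require.
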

\begin{proof} Since $a\in \mathcal{A}_{[v,w]}^{\tiny\textcircled{d}}$, it follows by Theorem 2.2 that there exist
$x\in \mathcal{A}_{[v,w]}^{\tiny\textcircled{\#}}$ and $y\in \mathcal{A}_w^{qnil}$ such that $a=x+y, (vx)^*vy=0, ywx=0$. As in the proof of Theorem 2.2,
$x=(aw)(a_{[v,w]}^{\tiny\textcircled{d}}v)a$ and $y=a-(aw)(a_{[v,w]}^{\tiny\textcircled{d}}v)a$.
Then $a=x+(y+b)$. Since $bwy=(bwa)[1-wa_{[v,w]}^{\tiny\textcircled{d}}va]=0$, it follows by ~\cite[Lemma2.4]{CK} that $y+b\in \mathcal{A}_w^{qnil}$.
Obviously, $(vx)^*v(y+b)=(vx)^*vy+[wa_{[v,w]}^{\tiny\textcircled{d}}va]^*[(va)^*vb]=0$. By virtue of Theorem 2.2,
$a+b\in \mathcal{A}_{[v,w]}^{\tiny\textcircled{d}}$. In this case, $$(a+b)_{[v,w]}^{\tiny\textcircled{d}}=x_{[v,w]}^{\tiny\textcircled{\#}}=a_{[v,w]}^{\tiny\textcircled{d}},$$ as desired.\end{proof}

\begin{cor} Let $M=\left(
\begin{array}{cc}
a&b\\
0&d
\end{array}
\right)$ with $a,d\in \mathcal{A}_{[v,w]}^{\tiny\textcircled{d}}$. If $(va)^*vb=0$ and $bwd=0$, then
$M\in M_2(\mathcal{A})_{[vI_2,wI_2]}^{\tiny\textcircled{d}}$.\end{cor}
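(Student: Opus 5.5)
Write $M=P+Q$ with
$$P=\left(\begin{array}{cc} a&b\\ 0&0\end{array}\right),\qquad Q=\left(\begin{array}{cc} 0&0\\ 0&d\end{array}\right),$$
and work in $M_2(\mathcal{A})$ with weights $V=vI_2$ and $W=wI_2$. Note that $V$ is invertible and $M_2(\mathcal{A})$ is a Banach *-algebra under conjugate transpose. The natural route is Theorem 2.6: if $P\in M_2(\mathcal{A})_{[V,W]}^{\tiny\textcircled{d}}$, $Q\in M_2(\mathcal{A})_W^{qnil}$, $(VP)^*VQ=0$ and $QWP=0$, then $M\in M_2(\mathcal{A})_{[V,W]}^{\tiny\textcircled{d}}$.

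\emph{The orthogonality conditions.} We compute
$$QWP=\left(\begin{array}{cc} 0&0\\ 0&dw\end{array}\right)\left(\begin{array}{cc} a&b\\ 0&0\end{array}\right)=0,$$
and
$$(VP)^*VQ=\left(\begin{array}{cc} (va)^*&0\\ (vb)^*&0\end{array}\right)\left(\begin{array}{cc} 0&0\\ 0&vd\end{array}\right)=0.$$
Both hold trivially, so neither hypothesis $(va)^*vb=0$ nor $bwd=0$ is used here.

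\emph{The difficulty.} $Q$ is generally not $W$-quasinilpotent, since $d$ is only generalized core invertible. So the decomposition should be arranged differently: use the generalized $[v,w]$-core decompositions of $a$ and $d$ directly and assemble one for $M$ via Theorem 2.2(3). Let $x=a_{[v,w]}^{\tiny\textcircled{d}}$ and $y=d_{[v,w]}^{\tiny\textcircled{d}}$. Try the candidate
$$X=\left(\begin{array}{cc} x&s\\ 0&y\end{array}\right),$$
with the corner $s$ to be determined, and verify the three conditions of Theorem 2.2(3) for $M$.

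\emph{Using the hypotheses.} We have
$$MW=\left(\begin{array}{cc} aw&bw\\ 0&dw\end{array}\right).$$
The condition $bwd=0$ makes powers of $MW$ tractable: the $(1,2)$ entry of $(MW)^n$ is $\sum_{i}(aw)^i\,bw\,(dw)^{n-1-i}$, and $bw(dw)^j=0$ for $j\geq 1$. Hence
$$(MW)^n=\left(\begin{array}{cc} (aw)^n&(aw)^{n-1}bw\\ 0&(dw)^n\end{array}\right).$$
The condition $(va)^*vb=0$ should make the self-adjointness $(VMWX)^*=VMWX$ work with $s=0$. Indeed, with $s=0$ the $(1,2)$ entry of $VMWX$ is $vbwy$, and its adjoint would need to vanish against the $(2,1)$ entry, which is $0$. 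So $s=0$ fails unless $bwy=0$. But $y=dwyvy\in dw\mathcal{A}$, so $bwy=bwdwyvy=0$. Thus $s=0$ works.

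\emph{Verification.} With $X=\mathrm{diag}(x,y)$:

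\begin{itemize}
\item $VMWX=\mathrm{diag}(vawx,\,vdwy)$ is self-adjoint.
\item $MWXVX=\mathrm{diag}(awxvx,\,dwyvy)$, using $bwyvy=0$, so it equals $X$.
\item For the limit, $(MW)^n-XV(MW)^{n+1}$ has diagonal entries of the required form, and these tend to $0$ in the $n$-th root sense.
\end{itemize}

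The main obstacle is the off-diagonal entry of the limit,
$$(aw)^{n-1}bw-xv(aw)^nbw=\bigl[(aw)^{n-1}-xv(aw)^n\bigr]bw.$$
Its norm is at most $\|(aw)^{n-1}-xv(aw)^n\|\,\|bw\|$, whose $n$-th root tends to $0$ by the hypothesis on $a$. Using a norm on $M_2(\mathcal{A})$ equivalent to the entrywise maximum, Theorem 2.2 then gives $M\in M_2(\mathcal{A})_{[vI_2,wI_2]}^{\tiny\textcircled{d}}$.

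Under this plan the hypothesis $(va)^*vb=0$ appears not to be needed, since $VMWX$ is already diagonal. If a gap appears in the computation, the fallback is to route it through Theorem 2.6 with a suitable splitting.
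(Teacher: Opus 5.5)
Your final argument is correct, but it takes a different route from the paper. The paper splits $M=P+Q$ with $P=\mathrm{diag}(a,d)$ and $Q=\bigl(\begin{smallmatrix}0&b\\0&0\end{smallmatrix}\bigr)$. Then $(QW)^2=0$, so $Q$ is automatically $W$-quasinilpotent. $P$ is generalized $[V,W]$-core invertible with diagonal inverse; the paper's ``direct verification'' of this is just your computation with $b=0$. The two hypotheses are exactly $QWP=0$ and $(VP)^*VQ=0$, so Theorem 2.6 applies. Your first splitting failed only because you put $d$, rather than $b$, into the quasinilpotent summand.

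You instead check the conditions of Theorem 2.2(3) directly for $X=\mathrm{diag}(x,y)$. Your two key facts are right: $bwy=(bwd)wyvy=0$, and the $(1,2)$ entry of $(MW)^n$ is $(aw)^{n-1}bw$. The index shift in the off-diagonal estimate is harmless, because $\|(aw)^{n-1}-xv(aw)^{n}\|^{1/(n-1)}\to 0$.

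The paper's proof is shorter and reuses the perturbation theorem. Yours is self-contained and correctly shows that only $bwd=0$ is needed, so $(va)^*vb=0$ is superfluous. Your sentence crediting the self-adjointness of $VMWX$ to $(va)^*vb=0$ is a slip: you actually use $bwd=0$ there. This is consistent with the paper. In Theorem 2.6, the condition $(va)^*vb=0$ serves only to produce the extra orthogonality $(vx)^*v(y+b)=0$ of Theorem 2.2(2). The definition of a generalized $[v,w]$-core decomposition does not require that condition.
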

\begin{proof} Write $M=P+Q$, where
$P=\left(
\begin{array}{cc}
a&0\\
0&d
\end{array}
\right), Q=\left(
\begin{array}{cc}
0&b\\
0&0
\end{array}
\right).$ We directly verify that $P\in M_2(\mathcal{A})_{[vI_2,wI_2]}^{\tiny\textcircled{d}}$ and $P_{[vI_2,wI_2]}^{\tiny\textcircled{d}}=\left(
\begin{array}{cc}
a_{[v,w]}^{\tiny\textcircled{d}}&0\\
0&d_{[v,w]}^{\tiny\textcircled{d}}
\end{array}
\right)$. Obviously, $[Q(wI_2)]^2=0$. By hypothesis, we verify that
$$Q(wI_2)P=0, \big((vI_2)P\big)^*(vI_2)Q=0.$$ By virtue of Theorem 2.6,
$M\in M_2(\mathcal{A})_{[vI_2,wI_2]}^{\tiny\textcircled{d}}$, as asserted.\end{proof}

\section{weighted polar-like property and principal ideals}

The aim of this section is to characterize the generalized $[v,w]$-core inverse by using a kind of polar-like property. We then
establish equivalent conditions involving principal ideals under which an element in a Banach algebra has a generalized $[v,w]$-core inverse.

Let $a\in \mathcal{A}$ and let $v\in \mathcal{A}^{-1}$ be Hermitian (i.e., $v$ is invertible and $v^*=v$).
An element $a\in \mathcal{A}$ has $v$-core inverse if there exists $x\in \mathcal{A}$ such that $$ax^2=x, (vax)^*=vax, xa^2=a.$$ If such $x$ exists,
it is unique, and denote it by $a_v^{\tiny\textcircled{\#}}$ (see~\cite{MCD}).

\begin{lem} Let $a\in \mathcal{A}^{d,w}$ and $v\in \mathcal{A}^{-1}$ be Hermitian. Then the following are equivalent:\end{lem}
\begin{enumerate}
\item [(1)] $a\in \mathcal{A}_{[v,w]}^{\tiny\textcircled{\#}}$.
\vspace{-.5mm}
\item [(2)] $aw\mathcal{A}=a\mathcal{A}$ and $aw\in \mathcal{A}_{v^2}^{\tiny\textcircled{\#}}$.
\end{enumerate}
In this case, $a_{[v,w]}^{\tiny\textcircled{\#}}=(aw)_{v^2}^{\tiny\textcircled{\#}}v^{-1}.$
\begin{proof} $\Rightarrow (2)$ Set $x=a_{[v,w]}^{\tiny\textcircled{\#}}$. Then
$$awxvx=x, xvawa=a, (vawx)^*=vawx.$$ As in the proof in Lemma 2.1, we have $awxva=a$. Then
$aw\mathcal{A}=a\mathcal{A}$. Since $(vawx)^*=vawx,$ we directly verify that
$$(v^2awxv)^*=v^*(vawx)^*v^*=v^*(vawx)v^*=v^2(awxv).$$ Since $aw(xv)^2=xv$ and $(xv)(aw)^2=aw$.
Therefore $aw\in \mathcal{A}_{v^2}^{\tiny\textcircled{\#}}$, as required.

$(2)\Rightarrow (1)$ Set $x=(aw)_{v^2}^{\tiny\textcircled{\#}}v^{-1}$. Since $aw\mathcal{A}=a\mathcal{A}$. Then we check that
$$\begin{array}{rll}
awxvx&=&aw[(aw)_{v^2}^{\tiny\textcircled{\#}}]^2v^{-1}=(aw)_{v^2}^{\tiny\textcircled{\#}}v^{-1}=x,\\
xvawa&=&(aw)_{v^2}^{\tiny\textcircled{\#}}awa=a,\\
vawx&=&vaw(aw)_{v^2}^{\tiny\textcircled{\#}}v^{-1}=v^{-1}[v^2aw(aw)_{v^2}^{\tiny\textcircled{\#}}]v^{-1},\\    .
(vawx)^*&=&vawx.
\end{array}$$ Hence, $a_{[v,w]}^{\tiny\textcircled{\#}}=x$. Therefore
$$a_{[v,w]}^{\tiny\textcircled{\#}}=(aw)_{v^2}^{\tiny\textcircled{\#}}v^{-1},$$ as desired.\end{proof}

We are ready to prove:

\begin{thm} Let $a\in \mathcal{A}^{d,w}$ and $v\in \mathcal{A}^{-1}$ be Hermitian. Then $a\in \mathcal{A}_{[v,w]}^{\tiny\textcircled{d}}$
if and only if there exists an idempotent $p\in \mathcal{A}$ such that $$aw+p\in \mathcal{A}^{-1}, (v^2p)^*=v^2p, p(aw)=p(aw)p\in \mathcal{A}^{qnil}.$$
\end{thm}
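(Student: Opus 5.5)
Both directions go through Theorem 2.4. Write $x=a_{[v,w]}^{\tiny\textcircled{d}}$ and $p=1-awxv$; in the converse, take $x=(1-p)(aw)^{d}v^{-1}$.

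\emph{Necessity.} Let $x=a_{[v,w]}^{\tiny\textcircled{d}}$. Then $x=awxvx$, $(vawx)^*=vawx$, $xvawx=x$, and $\lim_{n}\|(aw)^n-xv(aw)^{n+1}\|^{1/n}=0$.

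Put $e=awxv$ and $p=1-e$.
\begin{itemize}
\item $e$ is idempotent: $e^2=aw(xvawx)v=awxv=e$.
\item $v^2e=v(vawx)v$ is Hermitian because $v$ is Hermitian, so $(v^2p)^*=v^2p$.
\item From the proof of Theorem 2.2 we have $[awxv(aw)^2]x=(aw)^2x$. This gives $e(aw)e=awe$, and hence $p(aw)p=p(aw)$.
\end{itemize}

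For the quasinilpotence of $paw$: we have $(paw)^n=p(aw)^n$ because $paw=pawp$. Also $p(aw)^n=(aw)^n-awxv(aw)^n$, and the last computation in the proof of Theorem 2.2 shows its norm equals $\|(yw)^n\|$, with $y\in\mathcal{A}_w^{qnil}$. So $paw\in\mathcal{A}^{qnil}$.

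For invertibility of $aw+p$, use the Peirce decomposition relative to $e$.
\begin{itemize}
\item The corner $e(aw)e=awe$ has inverse $xv$ in $e\mathcal{A}e$. Indeed, $xv\in e\mathcal{A}e$, since $xv=awxvxv=e\,xv$ and $xv=xvawxv=xv\,e$. Moreover $awxv=e$, and $xv(aw)e=xvawawxv=(xvawx)\ldots$ reduces to $e$ via $xv(aw)^2x=awx$ (from Claim 3 of Theorem 2.2).
\item $pawe=0$.
\item So $aw+p$ is block upper triangular, with diagonal entries $e(aw)e$ (invertible in $e\mathcal{A}e$) and $p+paw p$ (invertible in $p\mathcal{A}p$, since $paw$ is quasinilpotent). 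Hence $aw+p\in\mathcal{A}^{-1}$.
\end{itemize}

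\emph{Sufficiency.} Let $e=1-p$. The hypothesis $p(aw)=p(aw)p$ gives $paw e=0$, so $aw$ is upper triangular: its $e$-corner is $eawe=awe$ and its $p$-corner is $paw p$. Invertibility of $aw+p$ then makes the corner $eawe$ invertible in $e\mathcal{A}e$; call its inverse $s$.

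Set $x=sv^{-1}$. Then:
\begin{itemize}
\item $awxvx=awsv^{-1}\cdot\ldots$: since $aws=awes=e$, we get $awxvx=e\,sv^{-1}=x$.
\item $vawx=v\,e\,v^{-1}$. Now $(v^2e)^*=v^2e$ together with $v^*=v$ gives $(vev^{-1})^*=v^{-1}ev$ up to rearrangement. From $ev^2=(v^2e)^*=v^2e$ we get $v^{-1}ev=vev^{-1}$, so $vawx$ is Hermitian.
\item For the limit, $(aw)^n-xv(aw)^{n+1}=(aw)^n-s(aw)^{n+1}$. Writing $aw$ in block form, $s(aw)^{n+1}$ recovers the $e$-rows of $(aw)^n$, and the remainder is $p(aw)^n=(paw)^n$. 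Since $paw\in\mathcal{A}^{qnil}$, its norm root tends to $0$.
\end{itemize}
Theorem 2.4 then gives $a\in\mathcal{A}_{[v,w]}^{\tiny\textcircled{d}}$.

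\emph{Main obstacle.} The delicate step is the block bookkeeping, especially checking that $e(aw)^{n}$ equals $s(aw)^{n+1}$ exactly: $e(aw)^{n+1}=eawe\cdot e(aw)^n$ holds because $paw e=0$ kills the cross term. I would verify this first, writing $aw=\begin{pmatrix}\alpha&\beta\\0&\gamma\end{pmatrix}$ relative to $e$. The Hermitian hypothesis on $v$ is used only for the symmetry translation.
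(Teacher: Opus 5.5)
Your necessity argument is correct and more direct than the paper's. The paper splits $a=x_1+y_1$ by Theorem 2.2 and passes, via Lemma 3.1, to the $v^2$-core inverse $z$ of $x_1w$. It sets $p=1-x_1wz$, which is the same idempotent as your $1-awxv$ because $z=xv$. It then gets $aw+p\in\mathcal{A}^{-1}$ from the factorization $aw+p=(x_1w+p)[1+(x_1w+p)^{-1}y_1w]$ and Cline's formula. Your Peirce argument, with the explicit corner inverse $xv$, avoids all of this.

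For sufficiency the paper proves that $(1-p)aw$ is g-Drazin invertible, which is where $aw\in\mathcal{A}^d$ is used. It then shows this element is group and $v^2$-core invertible, builds a second idempotent $q$, and uses the decomposition $a=(1-q)a+qa$. Your plan is much shorter and never uses $a\in\mathcal{A}^{d,w}$: invert $eawe$ in $e\mathcal{A}e$, put $x=sv^{-1}$, and check condition (3) of Theorem 2.2. As written, however, the sufficiency contains several errors, one exactly at the step you flagged.

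\emph{The limit step.} The relation $pawe=0$ does not kill the cross term. Since $e\,aw=eawe+eawp$, one gets $s(aw)^{n+1}=e(aw)^n+s\,aw\,p\,(paw)^n$, and the last term is nonzero in general. For instance, take matrix units $E_{ij}$ in $\mathbb{C}^{3\times 3}$, $v=1$, $aw=E_{11}+E_{12}+E_{23}$ and $p=E_{22}+E_{33}$. These satisfy all the hypotheses, $s=E_{11}$, and $s(aw)^2-e\,aw=E_{13}\neq 0$. The conclusion survives, because
$$(aw)^n-xv(aw)^{n+1}=(p-s\,aw\,p)(paw)^n,$$
so the norm is at most $\|p-s\,aw\,p\|\,\|(paw)^n\|$, and its $n$-th root still tends to $0$.

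\emph{Invertibility of the corner.} Invertibility of $aw+p$ alone only gives the left inverse $e(aw+p)^{-1}e$ of $eawe$ in $e\mathcal{A}e$. An invertible upper triangular element can have non-invertible diagonal corners: in $M_2(B(\ell^2))$, the unitary with diagonal entries $S,S^*$ and upper-right entry $1-SS^*$, where $S$ is the unilateral shift, is an example. You must also use that $\delta=p+paw$ is invertible in $p\mathcal{A}p$, which holds because $paw=pawp\in\mathcal{A}^{qnil}$. Then, with $\beta=eawp$, the element $1-\beta\delta^{-1}$ is invertible with inverse $1+\beta\delta^{-1}$. Moreover $(1-\beta\delta^{-1})(aw+p)=eawe+\delta$, so $eawe+\delta$ is invertible; it commutes with $e$, hence $eawe$ is invertible in $e\mathcal{A}e$.

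\emph{The Hermitian check.} You wrote $(v^2e)^*=ev^2$, but in fact $(v^2e)^*=e^*v^2$. Also $e$ need not commute with $v^2$, so $v^{-1}ev=vev^{-1}$ is false in general. The correct argument is: $e^*v^2=v^2e$ gives $e^*=v^2ev^{-2}$, hence $(vev^{-1})^*=v^{-1}e^*v=vev^{-1}$.

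\emph{The opening formula.} Your opening choice $x=(1-p)(aw)^dv^{-1}$ contradicts the body and is wrong. In the example above $(aw)^d=(aw)^2$, so $(1-p)(aw)^d=(aw)^2$, and then $vawx=(aw)^2$ is not Hermitian. The correct variant is $(aw)^d(1-p)v^{-1}=sv^{-1}$, but you do not need $(aw)^d$ at all.

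With these repairs your sufficiency proof is complete.
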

\begin{proof} $\Longrightarrow $ Since $a\in \mathcal{A}_{[v,w]}^{\tiny\textcircled{d}}$, by using Theorem 2.2, there exist $x,y\in \mathcal{A}$ such that $$a=x+y, (vx)^*vy=ywx=0, x\in
\mathcal{A}_{[v,w]}^{\tiny\textcircled{\#}}, y\in \mathcal{A}_w^{qnil}.$$
Set $y=x_{[v,w]}^{\tiny\textcircled{\#}}$. By virtue of Lemma 3.1, $xw\in \mathcal{A}_{v^2}^{\tiny\textcircled{\#}}$. Set
  $z=(xw)_{v^2}^{\tiny\textcircled{\#}}$. Then we have
$$(xw)z^2=z, z(xw)^2=xw, (v^2xwz)^*=v^2xwz.$$
Let $p=1-awz$. Then $p=1-(x+y)wz=1-xwz$. Clearly, $p^2=1-xwz-xwz+xw[z(xw)^2]z^2=1-xwz-xwz+(xw)^2z^2=1-xwz=p$. We check that
$$\begin{array}{rll}
(xw+p)^{-1}&=&z+1-zxw,\\
aw+p&=&(x+y)w+p=(xw+p)+yw\\
&=&(xw+p)[1+(xw+p)^{-1}yw],\\
yw(xw+p)^{-1}&=&yw[z+1-zxw]=yw\in \mathcal{A}^{qnil},\\
(xw+p)^{-1}yw&\in &\mathcal{A}^{qnil}.
\end{array}$$ Therefore $aw+p\in \mathcal{A}^{-1}$.

Since $(v^2xwz)^*=v^2xwz$, we see that $(v^2p)^*=v^2p$. We verify that
$$\begin{array}{rll}
paw&=&p(x+y)w=(1-xwz)yw\\
&=&yw-v^{-1}(vxwz)yw\\
&=&yw-v^{-1}(vxwz)^*yw\\
&=&yw-v^{-1}(wz)^*[(vx)^*vy]w\\
&=&yw\in \mathcal{A}^{qnil},
\end{array}$$ and so $paw\in \mathcal{A}^{qnil}$. Moreover, we have
$paw(1-p)=ywawz=yw(x+y)wx_{v^2}^{\tiny\textcircled{\#}}=(ywx)wx_{v^2}^{\tiny\textcircled{\#}}=0$, as required.

$\Longleftarrow $ By hypothesis, there exists an idempotent $p\in \mathcal{A}$ such that
$$u:=aw+p\in \mathcal{A}^{-1}, (v^2p)^*=v^2p, p(aw)=p(aw)p\in \mathcal{A}^{qnil}.$$
Step 1. Set $s=(1-p)(aw)$. We shall prove that $s\in \mathcal{A}_{v^2}^{\tiny\textcircled{\#}}$.

Claim 1. $s\in \mathcal{A}^d$.

Since $paw(1-p)=0$, we have $a=pawp+(1-p)awp+(1-p)aw(1-p)$, i.e.,
$aw=\left(
\begin{array}{cc}
pawp&0\\
(1-p)awp&(1-p)aw(1-p)
\end{array}
\right)_p$. By hypothesis, $paw\in \mathcal{A}^{qnil}$, then $pawp\in \mathcal{A}^{qnil}$ by using Cline's formula (see~\cite{CM}). This implies that
$pawp\in \mathcal{A}^d$. Since $aw\in \mathcal{A}^d$, it follows by ~\cite[Theorem 2.3]{CK} that
$(1-p)aw(1-p)\in \big((1-p)\mathcal{A}(1-p)\big)^d\subseteq \mathcal{A}^d.$ By Cline's formula again, $s=(1-p)aw\in \mathcal{A}^d,$ as desired.

Claim 2. $s\in \mathcal{A}^{\#}$. Observing that $$\begin{array}{rll}
u^{-1}s^2&=&u^{-1}(1-p)aw(1-p)aw=u^{-1}aw(1-p)aw-u^{-1}[paw(1-p)]aw\\
&=&u^{-1}aw(1-p)aw=(aw+p)^{-1}(aw+p)(1-p)aw=(1-p)aw=s,
\end{array}$$ we have $\mathcal{A}s=\mathcal{A}s^2$. In light of ~\cite[Lemma 2.1]{CM4},
$s\in \mathcal{A}^{\#}$.

Claim 3. $s\in \mathcal{A}_{v^2}^{\tiny\textcircled{\#}}$.

Obviously, $su^{-1}=(1-p)aw(aw+p)^{-1}=(1-p)(aw+p)(aw+p)^{-1}=1-p$. Then $su^{-1}s=(1-p)s=s$ and $(v^2su^{-1})^*=(v^2-v^2p)^*=(v^2)^*-(v^2p)^*
=v^2-v^2p=v^2su^{-1}$. According to ~\cite[Theorem 2.1]{MCD}, $s\in \mathcal{A}_{v^2}^{\tiny\textcircled{\#}}$.

Step 2. Let $q=1-ss_{v^2}^{\tiny\textcircled{\#}}$.
Then $q^2=q, (v^2q)^*=v^2q$ and $qs=0$. We verify that
$$(s+1-ss_{v^2}^{\tiny\textcircled{\#}})(s_{v^2}^{\tiny\textcircled{\#}}+1-s_{v^2}^{\tiny\textcircled{\#}}s)=1
=(s_{v^2}^{\tiny\textcircled{\#}}+1-s_{v^2}^{\tiny\textcircled{\#}}s)
(s+1-ss_{v^2}^{\tiny\textcircled{\#}}).$$
Hence, $$(s+q)^{-1}=s_{v^2}^{\tiny\textcircled{\#}}+1-s_{v^2}^{\tiny\textcircled{\#}}s.$$

Set $t=p(aw)$. Then $aw=s+t$. We check that $$\begin{array}{rll}
s^*v^2t&=&[(1-p)aw]^*v^2(paw)=[(aw)^*(1-p)^*](v^2p)^*aw\\
&=&(aw)^*[(1-p)^*p^*]v^2aw=0,\\
ts&=&paw(1-p)aw=[paw(1-p)]aw=0,\\
t&=&paw\in \mathcal{A}^{qnil}.
\end{array}$$

Since $t(s+q)=t(s+1-ss_{v^2}^{\tiny\textcircled{\#}})=t$, we see that $t(s+q)^{-1}=t\in \mathcal{A}^{qnil}$.
By using Cline's formula (see~\cite{CM}), $(s+q)^{-1}t\in \mathcal{A}^{qnil}$. Hence, $1+(s+q)^{-1}t\in \mathcal{A}^{-1}$.
Therefore, we check that
$$\begin{array}{rll}
qaw&=&q(s+t)=qt=(1-ss_{v^2}^{\tiny\textcircled{\#}})t=v^{-2}(v^2-v^2ss_{v^2}^{\tiny\textcircled{\#}})t\\
&=&v^{-2}(v^2-v^2ss_{v}^{\tiny\textcircled{\#}})^*t=v^{-2}(v^2-(s_{v^2}^{\tiny\textcircled{\#}})^*s^*v^2)t\\
&=&t-v^{-2}(s_{v^2}^{\tiny\textcircled{\#}})^*(s^*v^2t)=t\in \mathcal{A}^{qnil},\\
qaw(1-q)&=&tss_{v^2}^{\tiny\textcircled{\#}}=0, qaw=qawq,\\
aw+q&=&(s+t)+q=(s+q)[1+(s+q)^{-1}t]\in \mathcal{A}^{-1}.
\end{array}$$

As $aw=s+t$ and $ts=0$, we verify that $$\begin{array}{rll}
1-q&=&ss_{v^2}^{\tiny\textcircled{\#}}=s^2(s_{v^2}^{\tiny\textcircled{\#}})^2\\
&=&(s+t)s(s_{v^2}^{\tiny\textcircled{\#}})^2\\
&=&(aw)[ss_{v^2}^{\tiny\textcircled{\#}}]s_{v^2}^{\tiny\textcircled{\#}}\\
&=&(aw)(1-q)s_{v^2}^{\tiny\textcircled{\#}}\\
&\in &(aw)(1-q)\mathcal{A}.
\end{array}$$

Accordingly, we conclude that $$aw+q\in \mathcal{A}^{-1}, (v^2q)^*=vq, 1-q\in (aw)(1-q)\mathcal{A}, q(aw)=q(aw)q\in \mathcal{A}^{qnil}.$$

Step 3. Set $x=(1-q)a$ and $y=qa$. Then $a=x+y, ywx=qaw[(1-q)a]=[qaw(1-q)]a=0$ and $y\in\mathcal{A}_w^{qnil}$.

Set $z=(aw+q)^{-1}(1-q)$. Write $1-q=aw(1-q)r$ for some $r\in \mathcal{A}$.
Then we verify that
$$\begin{array}{rll}
xwz&=&(1-q)aw(aw+q)^{-1}(1-q)\\
&=&(1-q)(aw+q)(aw+q)^{-1}(1-q)=1-q,\\
(xw)z^2&=&(1-q)z=(1-q)(aw+q)^{-1}(1-q)\\
&=&(1-q)(aw+q)^{-1}aw(1-q)r\\
&=&(1-q)(aw+q)^{-1}(aw+q)(1-q)r\\
&=&(1-q)r=(aw+q)^{-1}(1-q)=z,\\
(v^2xwz)^*&=&[v^2(1-q)]^*=v^2(1-q)=v^2xwz,\\
z(xw)^2&=&(aw+q)^{-1}(1-q)aw(1-q)aw=(aw+q)^{-1}aw(1-q)aw\\
&=&(aw+q)^{-1}(aw+q)(1-q)aw=(1-q)aw=xw.
\end{array}$$
Thus $xw\in \mathcal{A}_{v^2}^{\tiny\textcircled{\#}}$ and $(xw)_{v^2}^{\tiny\textcircled{\#}}=z$.

Obviously, $(1-q)aw\mathcal{A}\subseteq (1-q)a\mathcal{A}$. Since $1-q=(1-q)^2=(1-q)aw(1-q)r$, we have
$(1-q)a\mathcal{A}\subseteq (1-q)aw\mathcal{A}$. Hence
$x\mathcal{A}=xw\mathcal{A}$. Accordingly, $x\in \mathcal{A}_{[v,w]}^{\tiny\textcircled{\#}}$ by Lemma 3.1. In light of
Theorem 2.2, $a\in \mathcal{A}_{[v,w]}^{\tiny\textcircled{d}}$, desired.\end{proof}

Set $$comm^*(a)=\{ x\in \mathcal{A}~\mid~(ax^2)a=(xax)a, x(a^2x)=(axa)x\}$$ (the weak commutant of $a$).

\begin{cor} Let $a\in \mathcal{A}^{d,w}$ and $v\in \mathcal{A}^{-1}$ be Hermitian. Then the following are equivalent:\end{cor}
\begin{enumerate}
\item [(1)] $a\in \mathcal{A}_{[v,w]}^{\tiny\textcircled{d}}$.
\vspace{-.5mm}
\item [(2)] There exists $x\in comm^*(aw)$ such that $x=awx^2, (v^2awx)^*=v^2awx, a-awxa\in \mathcal{A}_w^{qnil}.$
\end{enumerate}
\begin{proof} This is proved by Theorem 3.2 and ~\cite[Theorem 2.5]{CM4}.\end{proof}

\begin{cor} Let $a\in \mathcal{A}^{d,w}$. Then $a\in \mathcal{A}_w^{\tiny\textcircled{d}}$ if and only if there exists a projection $p\in \mathcal{A}$ such that $$aw+p\in \mathcal{A}^{-1}, p(aw)=p(aw)p\in \mathcal{A}^{qnil}.$$
\end{cor}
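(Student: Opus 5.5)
This corollary should be the special case $v=1$ of Theorem 3.2. Here $\mathcal{A}_w^{\tiny\textcircled{d}}$ is read as $\mathcal{A}_{[1,w]}^{\tiny\textcircled{d}}$, the generalized $w$-core inverse. The element $v=1$ is invertible and Hermitian, so the hypotheses of Theorem 3.2 hold once $a\in\mathcal{A}^{d,w}$. The definitions then reduce as expected: with $v=1$ the $[v,w]$-core inverse conditions become $awx^2=x$, $xawa=a$, $(awx)^*=awx$, which are the $w$-core inverse conditions, and the generalized decomposition of Section 2 becomes the generalized $w$-core decomposition. Hence $\mathcal{A}_{[1,w]}^{\tiny\textcircled{d}}=\mathcal{A}_w^{\tiny\textcircled{d}}$.

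Specialize Theorem 3.2 with $v=1$: we get $a\in\mathcal{A}_w^{\tiny\textcircled{d}}$ if and only if there is an idempotent $p$ with $aw+p\in\mathcal{A}^{-1}$, $(v^2p)^*=v^2p$ and $p(aw)=p(aw)p\in\mathcal{A}^{qnil}$. Since $v^2=1$, the condition $(v^2p)^*=v^2p$ becomes $p^*=p$. An idempotent with $p^*=p$ is by definition a projection. Substituting this gives exactly the statement.

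The only step needing care is the identification of $\mathcal{A}_w^{\tiny\textcircled{d}}$ with $\mathcal{A}_{[1,w]}^{\tiny\textcircled{d}}$. It follows from Theorem 2.4 at $v=1$, which characterizes the inverse by $x=awx^2$, $(awx)^*=awx$ and $\lim_{n\to\infty}\|(aw)^n-x(aw)^{n+1}\|^{1/n}=0$. These are the defining conditions of the generalized $w$-core inverse in the weighted literature the paper builds on. Beyond this identification there is no real obstacle.
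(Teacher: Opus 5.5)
Your proposal is correct and matches the paper's own proof, which likewise sets $v=1$ in Theorem 3.2, so that $(v^2p)^*=v^2p$ becomes $p^*=p$ and the idempotent $p$ becomes a projection. Your extra remark identifying the generalized $w$-core invertible elements with the generalized $[1,w]$-core invertible ones, via the $v=1$ case of Theorem 2.4, only makes explicit what the paper takes for granted.
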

\begin{proof} This is obvious by choosing $v=1$ in Theorem 3.2.\end{proof}

We are ready to prove:

\begin{thm} Let $a\in \mathcal{A}^{d,w}$ and $v\in \mathcal{A}^{-1}$. Then the following are equivalent:\end{thm}
\begin{enumerate}
\item [(1)] $a\in \mathcal{A}_{[v,w]}^{\tiny\textcircled{d}}$.
\item [(2)] There exists $x\in \mathcal{A}$ such that $$awxvx=x, x\mathcal{A}=a^{d,w}\mathcal{A},
\mathcal{A}x=\mathcal{A}(va^{d,w})^*.$$
\end{enumerate} In this case, $a_{[v,w]}^{\tiny\textcircled{d}}=x.$
\begin{proof} $(1)\Rightarrow (2)$ Set $x=a_{[v,w]}^{\tiny\textcircled{d}}$. By virtue of Theorem 2.2, we have $$\begin{array}{c}
x=awxvx, (vawx)^*=vawx, \lim\limits_{n\to \infty}||(aw)^n-(xv)(aw)^{n+1}||^{\frac{1}{n}}=0.
\end{array}$$ Then $x=xvawx$. Since $aw\in \mathcal{A}^d$, we see that $$x=(aw)^{n}x(vx)^n=(aw)^d(aw)^{n+1}x(vx)^n\subseteq a^{d,w}\mathcal{A}.$$
On the other hand, we have $$\begin{array}{rll}
(aw)^d&=&(aw)^n((aw)^d)^{n+1}=(xv)(aw)^{n+1}((aw)^d)^{n+1}\\
&=&xv(aw)(aw)^d\in x\mathcal{A}.
\end{array}$$ Hence $x\mathcal{A}=a^{d,w}\mathcal{A}$. Since $xvawx=x, (vawx)^*=vawx$, we derive that
$x^*=(vawx)^*x^*=vawxx^*\in va^{d,w}\mathcal{A}$; hence, $\mathcal{A}x\subseteq \mathcal{A}(va^{d,w})^*$.
Since $$\begin{array}{rll}
a^{d,w}&=&(aw)[(aw)^d]^3a=(aw)(aw)^n[(aw)^d]^{n+1}[(aw)^d]^2a\\
&=&(aw)xv(aw)^{n+1}[(aw)^d]^{n+1}[(aw)^d]^2a=awxv[(aw)^d]^2a,
\end{array}$$ we have $va^{d,w}\in vawxv[(aw)^d]^2a=(vawx)^*v[(aw)^d]^2a$; hence,
$(va^{d,w})^*=[v((aw)^d)^2a]^*vawx\in \mathcal{A}x$. Therefore $\mathcal{A}x=\mathcal{A}(va^{d,w})^*,$ required.

$(2)\Rightarrow (1)$ Obviously, $x^*\mathcal{A}=va^{d,w}\mathcal{A}=vx\mathcal{A}$.
Since $x=awxvx$, we have $vx=vawxvx$, and so $(1-vawx)vx=0$. This implies that $(1-vawx)x^*=0$.
That is, $x^*=vawxx^*$. This implies that $(vawx)^*=x^*w^*a^*v^*=(vawx)(vawx)^*$; hence,
$vawx=[(vawx)(vawx)^*]^*=(vawx)(vawx)^*=(vawx)^*$. Furthermore,
$x=x(vawx)^*=xvawx$.

Write $(aw)^d=xt$ for some $t\in \mathcal{A}$. For any $n\in {\Bbb N}$, we have
$$\begin{array}{rll}
(aw)^d(aw)^{n+1}-(xv)(aw)^d(aw)^{n+2}&=&[(aw)^d-(xv)aw(aw)^d](aw)^{n+1}\\
&=&[x-xvawx]t(aw)^{n+1}=0.
\end{array}$$ Then
$$\begin{array}{rl}
&||(aw)^n-(xv)(aw)^{n+1}||\\
=&||(aw)^n-(aw)^d(aw)^{n+1}-(xv)[(aw)^n-(aw)^d(aw)^{n+1}](aw)\\
+&[(aw)^d(aw)^{n+1}-(xv)(aw)^d(aw)^{n+2}|| \\
=&||(aw)^n-(aw)^d(aw)^{n+1}-(xv)[(aw)^n-(aw)^d(aw)^{n+1}](aw)||\\
\leq&[1+||xv||||aw||]||(aw)^n-(aw)^d(aw)^{n+1}||.
\end{array}$$
Hence, $||(aw)^n-(xv)(aw)^{n+1}||^{\frac{1}{n}}\leq [1+||xv||||aw||]^{\frac{1}{n}}||(aw)^n-(aw)^d(aw)^{n+1}||^{\frac{1}{n}}.$
Therefore $\lim\limits_{n\to \infty}||(aw)^n-(xv)(aw)^{n+1}||^{\frac{1}{n}}=0$, as desired.\end{proof}

\begin{cor} Let $a\in \mathcal{A}^{d,w}$ and $v\in \mathcal{A}^{-1}$ be Hermitian. Then the following are equivalent:\end{cor}
\begin{enumerate}
\item [(1)] $a\in \mathcal{A}_{[v,w]}^{\tiny\textcircled{d}}$.
\item [(2)] There exists $x\in \mathcal{A}$ such that $$xvawx=x, (vawx)^*=vawx, x\mathcal{A}=a^{d,w}\mathcal{A},
\mathcal{A}x=\mathcal{A}(va^{d,w})^*.$$
\end{enumerate} In this case, $a_{[v,w]}^{\tiny\textcircled{d}}=x.$
\begin{proof} $(1)\Rightarrow (2)$ This is obvious by Theorem 3.5.

$(2)\Rightarrow (1)$ Since $xvawx=x$ and $(vawx)^*=vawx$, we have $(vawx)^*x^*=x^*$.
Since $\mathcal{A}x=\mathcal{A}(va^{d,w})^*$, we see that $(va^{d,w})^*=zx$ for some $z\in \mathcal{A}$. Then
$v(aw)^d=x^*z^*$. This implies that $vawxv(aw)^d=v(aw)^d$. As $x\mathcal{A}=a^{d,w}\mathcal{A}$, we can find some $s\in \mathcal{A}$ such that
$x=(aw)^ds$. We infer that $vawxv[(aw)^ds]=v(aw)^ds$
Thus $vawxvx=vx$, and so $awxvx=x$. We obtain the result by Theorem 3.5.\end{proof}

Let $a,b,c\in \mathcal{A}$. An element $a$ has $(b,c)$-inverse provide that there exists $x\in \mathcal{A}$ such that $$xab=b, cax=c
~\mbox{and}~ x\in b\mathcal{A}x\bigcap x\mathcal{A}c.$$ If such $x$ exists, it is unique and denote it by $a^{(b,c)}$ (see~\cite{DM}).

\begin{thm} Let $a\in \mathcal{A}^{d,w}$ and $v\in \mathcal{A}^{-1}$. Then the following are equivalent:\end{thm}
\begin{enumerate}
\item [(1)] $a\in \mathcal{A}_{[v,w]}^{\tiny\textcircled{d}}$.
\item [(2)] $vaw\in \mathcal{A}^{\big(a^{d,w}, (va^{d,w})^*\big)}$ and $vaw(vaw)^{\big(a^{d,w}, (va^{d,w})^*\big)}$ is Hermitian.
\end{enumerate} In this case, $a_{[v,w]}^{\tiny\textcircled{d}}=(vaw)^{\big(a^{d,w}, (va^{d,w})^*\big)}.$
\begin{proof} $(1)\Rightarrow (2)$ Set $x=a_{[v,w]}^{\tiny\textcircled{d}}$. Then $x(vaw)x=x$ by Corollary 3.6.

Claim 1. $x(vaw)a^{d,w}=a^{d,w},(va^{d,w})^*(vaw)x=(va^{d,w})^*.$

$$\begin{array}{rll}
x(vaw)(aw)^da&=&x(vaw)(aw)^da\\
&=&(aw)^d[(aw)(xv)(aw)](aw)^da\\
&=&(aw)^d(aw)(aw)^da\\
&=&(aw)^da,\\
\end{array}$$ Hence, $x(vaw)a^{d,w}=a^{d,w}$.

$$\begin{array}{rll}
(v(aw)^da)^*(vaw)x&=&(v(aw)^da)^*(vawx)^*\\
&=&[vawxv(aw)^da]^*=[v(aw)^da]^*,\\
\end{array}$$ Since $a^{d,w}=[(aw)^d]^2a$, we have $(va^{d,w})^*(vaw)x=(va^{d,w})^*.$

Claim 2. $x\in a^{d,w}\mathcal{A}x.$

$$x=awxvx=(aw)^d(aw)^2xvx\in (aw)^da\mathcal{A}x.$$

Claim 3. $x\in x\mathcal{A}(va^{d,w})^*.$

$x=xvawx=x(vawx)^*=x[v(aw)^d(aw)^2x]^*=x(wawx)^*[v(aw)^da]^*$, and so $x\in x\mathcal{A}(va^{d,w})^*.$

Therefore $a_{[v,w]}^{\tiny\textcircled{d}}=(vaw)^{\big(a^{d,w}, (va^{d,w})^*\big)}.$

$(2)\Rightarrow (1)$ Let $x=(vaw)^{\big(a^{d,w}, (va^{d,w})^*\big)}$. Then
$$\begin{array}{c}
xvawx=x, x(vaw)(aw)^da=(aw)^da, (v(aw)^da)^*(vaw)x=(v(aw)^da)^*,\\
x\in (aw)^da\mathcal{A}x\bigcap x\mathcal{A}(v(aw)^da)^*.
\end{array}$$ By assumption, we have $(vawx)^*=vawx$.

Claim 1. $x\mathcal{A}=a^{d,w}\mathcal{A}$. Obviously, $x\in a^{d,w}\mathcal{A}$. On the other hand, we have
$$a^{d,w}=[(aw)^d]^2a=(aw)^d(aw)[(aw)^d]^2a\in x\mathcal{A}.$$

Claim 2. $\mathcal{A}x=\mathcal{A}(va^{d,w})^*$. We verify that

$$x=xvawx=x(vawx)^*=x(v[(aw)^d]^2(aw)^2x)=x(wawx)^*(va^{d,w})^*,$$ and so $x\in \mathcal{A}(va^{d,w})^*$.

Since $(v(aw)^da)^*(vaw)x=(v(aw)^da)^*$, we see that $$(va^{d,w})^*=[v(aw)^daw((aw)^d)^2a]^*\in \mathcal{A}(a^{d,w})^*\subseteq \mathcal{A}x.$$
Therefore $\mathcal{A}x=\mathcal{A}(va^{d,w})^*$, as asserted.\end{proof}

\begin{cor} Let $a\in \mathcal{A}^{d}$ and let $v\in \mathcal{A}^{-1}$ be Hermitian. Then the following are equivalent:\end{cor}
\begin{enumerate}
\item [(1)] $a\in \mathcal{A}^{v,\tiny\textcircled{d}}$.
\item [(2)] $a\in \mathcal{A}^{\big(a^{d}, (va^{d})^*\big)}$ and $aa^{\big(a^{d}, (va^{d})^*\big)}$ is Hermitian.
\end{enumerate} In this case, $a^{v,\tiny\textcircled{d}}=a^{\big(a^{d}, (va^{d})^*\big)}.$
\begin{proof} This is obvious by choosing $w=1$ in Theorem 3.7.\end{proof}

\section{representations of the generalized $[v,w]$-core inverse}

The aim of this section is to establish representations of the generalized $[v,w]$-core inverse with an invertible Hermitian weight $v$.

\begin{thm} Let $a\in \mathcal{A}^{d,w}$ and $v\in \mathcal{A}^{-1}$ be Hermitian. Then $a\in \mathcal{A}_{[v,w]}^{\tiny\textcircled{d}}$ if and only if $a^{d,w}\in\mathcal{A}_{[v,w]}^{\tiny\textcircled{\#}}$. In this case, $$a_{[v,w]}^{\tiny\textcircled{d}}=[a^{d,w}w]^2(a^{d,w})_{[v,w]}^{\tiny\textcircled{\#}}.$$\end{thm}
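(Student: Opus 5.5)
Write $b=a^{d,w}=[(aw)^d]^2a$ and recall the standard facts $bw=(aw)^d$, $bwawb=b$, and $awbw=bwaw=(aw)(aw)^d$. The plan is to pass in both directions through the characterization of Theorem 3.5 (equivalently Corollary 3.6): $a\in\mathcal{A}_{[v,w]}^{\tiny\textcircled{d}}$ holds exactly when some $x$ satisfies $awxvx=x$, $x\mathcal{A}=b\mathcal{A}$ and $\mathcal{A}x=\mathcal{A}(vb)^*$. We then compare this with the defining equations of $b_{[v,w]}^{\tiny\textcircled{\#}}$, namely $bwyvy=y$, $yvbwb=b$ and $(vbwy)^*=vbwy$.

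\emph{Necessity.} Let $x=a_{[v,w]}^{\tiny\textcircled{d}}$ and put $y=awawxv\,awx$. This is motivated by the inverse formula: since $(bw)^2(aw)^2=(aw)(aw)^d$, the candidate $y$ should be $(aw)^2x$ up to the harmless right factor $vawx$. We will verify the three defining equations for $y$. The proof of Theorem 3.5 gives $x=(aw)^d(aw)^{n+1}x(vx)^n$, so $x\in (aw)^d\mathcal{A}$ and hence $(aw)(aw)^dx=x$. It also gives $(aw)^d=xvaw(aw)^d$. From these we expect
\[
bw\,y=(aw)^d(aw)^2 xvawx=awx,\qquad vbwy=vawx,
\]
which is Hermitian by Theorem 2.2. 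The other two equations then follow:
\[
bwyvy=awxv(aw)^2xvawx=y,\qquad yvbwb=(aw)^2xv(aw)^db=(aw)^2(aw)^db=b,
\]
where in the last step we use $xv(aw)^d=\ldots$, more precisely $xvaw(aw)^d=(aw)^d$. The bookkeeping needs care here; if a clean form fails, we will instead define $y=(aw)^2x$ and check the equations directly. This produces $y=b_{[v,w]}^{\tiny\textcircled{\#}}$, and the representation $x=(bw)^2y$ then follows from $(bw)^2(aw)^2x=(aw)(aw)^dx=x$.

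\emph{Sufficiency.} Let $y=b_{[v,w]}^{\tiny\textcircled{\#}}$ and set $x=(bw)^2y$. We check the conditions of Theorem 3.5.

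First, $awxvx=x$. Since $aw(bw)^2=bw$, we get $awxvx=bwy\,v(bw)^2y$. We want to move $bw\,y\,v\,bw$ past using the relation $yvbwb=b$, which gives $yvbw\cdot bw=bw$, together with $bwyvy=y$. This should reduce $awxvx$ to $(bw)^2y=x$.

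Second, $x\mathcal{A}=b\mathcal{A}$. Clearly $x\in b\mathcal{A}$. Conversely, $bwyvbwb=b$ by Lemma 2.1 applied to $b$, and $b=(bw)^2(aw)^2b$, which gives $b\in x\mathcal{A}$.

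Third, $\mathcal{A}x=\mathcal{A}(vb)^*$. We have $x^*=y^*w^*b^*w^*b^*$. Using $(vbwy)^*=vbwy$ and $y=yvbwy$, we get $x^*=(vbwy)\cdot(\cdots)\in vb\mathcal{A}$, so $\mathcal{A}x\subseteq\mathcal{A}(vb)^*$. Conversely, $vb=vbwyvb=(vbwy)^*vb$, which expresses $(vb)^*$ through $vbwy=v(aw)(bw)^2y\,$-type terms.

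The main obstacle is this last inclusion: we must show $(vb)^*\in\mathcal{A}x$, i.e.\ $vbwy\in vx\mathcal{A}$-type relations linking $bwy$ and $awx$. We plan to show $awx=bwy$, from $aw(bw)^2y=bwy$, so that $vawx=vbwy$ is Hermitian and $(vb)^*=(vb)^*vbwy=(vb)^*vawx$. We would then need $(vb)^*vaw\in\mathcal{A}$, which is immediate, giving $(vb)^*\in\mathcal{A}x$. Finally, uniqueness (Theorem 2.4) yields $a_{[v,w]}^{\tiny\textcircled{d}}=[a^{d,w}w]^2(a^{d,w})_{[v,w]}^{\tiny\textcircled{\#}}$.
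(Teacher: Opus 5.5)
Your proof is correct in outline, and the necessity half is the paper's argument. Because $xvawx=x$, your $y=(aw)^2xv\,awx$ is just $(aw)^2x$, the element the paper uses. You also check the literal defining equation $yvbwb=b$, where the paper's proof checks $bwyv\,bwb=bwb$ and $yv(bw)^2b=bwb$ instead.

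The sufficiency half takes a different route. The paper sets $x=(bw)^2y$ and verifies $awxvx=x$, $xvawx=x$, $(vawx)^*=vawx$ and the limit condition of Theorem 2.2 by direct computation. You verify only $awxvx=x$ and the two principal-ideal conditions, and let Theorem 3.5 supply the rest. That is the safer choice. The paper's limit computation rewrites $xv(aw)^{n+1}$ as $xv(aw)^d(aw)^{n+2}$, i.e.\ it uses $(aw)^{n+1}=(aw)^d(aw)^{n+2}$. This holds for large $n$ when $aw$ has finite Drazin index (as for matrices), but not for a general g-Drazin invertible $aw$. The proof of (2)$\Rightarrow$(1) in Theorem 3.5, by contrast, handles the limit correctly through $xvaw(aw)^d=(aw)^d$ and $\|(aw)^n-(aw)^d(aw)^{n+1}\|^{1/n}\to 0$. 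Neither half of your argument uses that $v$ is Hermitian.

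Three steps need repair.

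\emph{Necessity, the equation $yvbwb=b$.} Your chain $yvbwb=(aw)^2xv(aw)^db=(aw)^2(aw)^db=b$ is wrong in the middle, since $(aw)^2(aw)^db=awb$. Instead, multiply $xvaw(aw)^d=(aw)^d$ on the right by $(aw)^d$ to get $xv(aw)^d=((aw)^d)^2$. Then $yvbwb=(aw)^2((aw)^d)^2b=aw(aw)^db=b$.

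\emph{Sufficiency, the inclusion $b\in x\mathcal{A}$.} Here $bwyvbwb=bw(yvbwb)=bwb$, not $b$; the computation in Lemma 2.1 gives $bwyvb=b$. Moreover, $b=(bw)^2(aw)^2b$ alone does not put $b$ in $x\mathcal{A}$. What is missing is $(aw)^2b\in y\mathcal{A}$, which you can get as follows:
\begin{itemize}
\item from $y=bwyvy$ and $b=yvbwb$ you get $y\mathcal{A}=b\mathcal{A}=(aw)^d\mathcal{A}$;
\item since $(aw)^2b=(aw)^d(awa)$, there is some $s$ with $(aw)^2b=ys$;
\item hence $b=(bw)^2ys=xs$.
\end{itemize}

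\emph{The cited identity $x=(aw)^d(aw)^{n+1}x(vx)^n$.} You quote this from the proof of Theorem 3.5, but it tacitly uses $(aw)^n=(aw)^d(aw)^{n+1}$ and is false in general. Use instead the statement $x\mathcal{A}=a^{d,w}\mathcal{A}=(aw)^d\mathcal{A}$ of Theorem 3.5. Then $(aw)^d=xt$ for some $t$, and $xvawx=x$ gives $xvaw(aw)^d=(aw)^d$.

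With these fixes the proof is complete.
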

\begin{proof} $\Longrightarrow $ Let $x=a_{[v,w]}^{\tiny\textcircled{d}}$. Then we can find
$x\in \mathcal{A}$ such that $$\begin{array}{c}
x=awxvx=xvawx, (vawx)^*=vawx,\\
\lim\limits_{n\to \infty}||(aw)^n-(xv)(aw)^{n+1}||^{\frac{1}{n}}=0.
\end{array}$$
We verify that
 $$\begin{array}{rll}
||(aw)^d-xvaw(aw)^d||&=&||(aw)^n[(aw)^d]^{n+1}-xv(aw)^{n+1}[(aw)^d]^{n+1}||\\
&\leq&||(aw)^n-xv(aw)^{n+1}||||(aw)^d]^{n+1}||.
\end{array}$$ Since $\lim\limits_{n\to \infty}||(aw)^n-xv(aw)^{n+1}||^{\frac{1}{n}}=0$, we deduce that $$\lim\limits_{n\to \infty}||(aw)^d-xv(aw)(aw)^d||^{\frac{1}{n}}=0.$$
Hence, $xv(aw)(aw)^d=(aw)^d$. Let $z=(aw)^2x$. Then
$$\begin{array}{rll}
a^{d,w}wzvz&=&(aw)^dzvz=(aw)^d(aw)^2xvz=awxvz=z,\\
zva^{d,w}wz&=&zv(aw)^dz=(aw)^2xv(aw)^dz=aw(aw)^dz=z,\\
va^{d,w}wz&=&v(aw)^d(aw)^2x=vawx,\\
(va^{d,w}wz)^*&=&(vawx)^*=vawx=va^{d,w}wz,\\
a^{d,w}wzv(a^{d,w}w)a^{d,w}&=&(aw)^dzv(aw)^da^{d,w}=(aw)^d(aw)^2xv(aw)[(aw)^d]^2a^{d,w}\\
&=&(aw)(xv)(aw)[(aw)^d]^4a =[(aw)^d]^3a=a^{d,w}wa^{d,w},\\
zv(a^{d,w}w)^2a^{d,w}&=&(aw)^2xv((aw)^d)^4a=(aw)((aw)^d)^4a=a^{d,w}wa^{d,w}.
\end{array}$$ Thus, $a^{d,w}\in \mathcal{A}_{[v,w]}^{\tiny\textcircled{\#}}$ and $(a^{d,w})_{[v,w]}^{\tiny\textcircled{\#}}=z=(aw)^2a_{[v,w]}^{\tiny\textcircled{d}},$ as desired.

$\Longleftarrow $ Set $x=[a^{d,w}w]^2(a^{d,w})_{[v,w]}^{\tiny\textcircled{\#}}$. Let $z=(a^{d,w})_{[v,w]}^{\tiny\textcircled{\#}}.$
Then we have $$\begin{array}{c}
a^{d,w}wzvz=zva^{d,w}wz=z, (va^{d,w}wz)^*=va^{d,w}wz,\\
a^{d,w}wzv(a^{d,w}w)a^{d,w}=zv(a^{d,w}w)^2a^{d,w}=a^{d,w}wa^{d,w}.
\end{array}$$ Hence,

$$\begin{array}{c}
(aw)^dzvz=zv(aw)^dz=z, (v(aw)^dz)^*=v(aw)^dz,\\
(aw)^dzv(aw)^da^{d,w}=zv((aw)^d)^2a^{d,w}=(aw)^da^{d,w}.
\end{array}$$
 Then $x=[(aw)^d]^2z$. We verify that
$$\begin{array}{rll}
awxvx&=&aw[(aw)^d]^2zv[(aw)^d]^2z\\
&=&(aw)^d\big(zv[(aw)^d]^2\big)z\\
&=&(aw)^d\big(zv[(aw)^d]^2\big)z\\
&=&(aw)^d\big(zv[(aw)^d]^2a^{d,w}\big)[w(aw)z]\\
&=&[(aw)^d]^2z=x,\\
xvawx&=&[(aw)^d]^2zvaw[(aw)^d]^2z=[(aw)^d]^2[zv(aw)^dz]=[(aw)^d]^2z=x,\\
vawx&=&vaw[(aw)^d]^2z=v(aw)^dz,\\
(vawx)^*&=&[v(aw)^dz]^*=v(aw)^dz=vawx,
\end{array}$$

Observing that $$\begin{array}{rl}
&(aw)^n-(xv)(aw)^{n+1}\\
=&(aw)^n-(xv)(aw)^d(aw)^{n+2}\\
=&(aw)^n-[(aw)^d]^2zv(aw)^d(aw)^{n+2}\\
=&(aw)^n-[(aw)^d]^2zv(aw)^d[(aw)^d]^2a[w(aw)^{n+3}]\\
=&(aw)^n-[(aw)^d][(aw)^da^{d,w}][w(aw)^{n+3}]\\
=&(aw)^n-[(aw)^d]^3[(aw)^{n+3}]\\
=&(aw)^n-(aw)^d[(aw)^{n+1},
\end{array}$$ we have $||(aw)^n-(xv)(aw)^{n+1}|||^{\frac{1}{n}}=0.$ Accordingly, $$a_{[v,w]}^{\tiny\textcircled{d}}=[a^{d,w}w]^2(a^{d,w})_{[v,w]}^{\tiny\textcircled{\#}},$$ as required.\end{proof}

Let $a,x\in \mathcal{A}$. The element $x$ is called $(1,3,v)$-inverse of $a$ and is denoted by $a^{(1,3,v)}$ provided that $axva=a$ and $(vax)^*=vax$.
We use $\mathcal{A}^{(1,3,v)}$ to stand for the set of all $(1,3,v)$ invertible elements in $\mathcal{A}$.

\begin{cor} Let $a\in \mathcal{A}^{d,w}$ and $v\in \mathcal{A}^{-1}$ be Hermitian. Then the following are equivalent:\end{cor}
\begin{enumerate}
\item [(1)] $a\in \mathcal{A}_{[v,w]}^{\tiny\textcircled{d}}$.
\vspace{-.5mm}
\item [(2)] $a^{d,w}\in \mathcal{A}^{(1,3,v)}$.
\end{enumerate}
In this case, $a_{[v,w]}^{\tiny\textcircled{d}}=a^{d,w}wa^{d,w}(a^{d,w})^{(1,3,v)}.$
\begin{proof} $(1)\Rightarrow (2)$ By virtue of Theorem 4.1, $a^{d,w}\in \mathcal{A}_{[v,w]}^{\tiny\textcircled{\#}}$.
Set $z=(a^{d,w})_{[v,w]}^{\tiny\textcircled{\#}}$. Then
$a^{d,w}wzva^{d,w}=a^{d,w}.$ Hence $a^{d,w}wzva^{d,w}=a^{d,w}$.
Since $(va^{d,w}w)^*=va^{d,w}w$, we prove that $a^{d,w}\in \mathcal{A}^{(1,3,v)}$, as desired.

$(2)\Rightarrow (1)$ Let $x=a^{d,w}wa^{d,w}(a^{d,w})^{(1,3,v)}$. Then we check that
$$\begin{array}{rll}
awxvx&=&awa^{d,w}wa^{d,w}(a^{d,w})^{(1,3,v)}va^{d,w}wa^{d,w}(a^{d,w})^{(1,3,v)}\\
&=&[a^{d,w}(a^{d,w})^{(1,3,v)}va^{d,w}]wa^{d,w}(a^{d,w})^{(1,3,v)}\\
&=&a^{d,w}wa^{d,w}(a^{d,w})^{(1,3,v)}=x.
\end{array}$$
Obviously, we have $x\mathcal{A}\subseteq a^{d,w}\mathcal{A}$. On the other hand,
$$a^{d,w}=[a^{d,w}wa^{d,w}]wa=[a^{d,w}wa^{d,w}(a^{d,w})^{(1,3,v)}]a^{d,w}wa=xa^{d,w}wa;$$ whence, $a^{d,w}\mathcal{A}\subseteq x\mathcal{A}.$
Hence, $x\mathcal{A}=a^{d,w}\mathcal{A}$.

We easily verify that $$\begin{array}{rll}
x&=&a^{d,w}wa^{d,w}(a^{d,w})^{(1,3)}=a^{d,w}w[a^{d,w}(a^{d,w})^{(1,3)}]\\
&=&a^{d,w}wv^{-1}[va^{d,w}(a^{d,w})^{(1,3)}]^*=a^{d,w}wv^{-1}[(a^{d,w})^{(1,3)}]^*[va^{d,w}]^*;
\end{array}$$ and so
$\mathcal{A}x\subseteq \mathcal{A}(va^{d,w})^*$. On the other hand, we have $$\begin{array}{rll}
[va^{d,w}]^*&=&[va^{d,w}(a^{d,w})^{(1,3)}va^{d,w}]^*=[(va^{d,w}(a^{d,w})^{(1,3)})^*(va^{d,w})]^*\\
&=&[va^{d,w}]^*va^{d,w}(a^{d,w})^{(1,3)}\\
&=&[va^{d,w}]^*[va^{d,w}wawa^{d,w}](a^{d,w})^{(1,3)}\\
&=&[a^{d,w}]^*[a^{d,w}wa]wa^{d,w}(a^{d,w})^{(1,3)}\\
&=&[a^{d,w}]^*[awa^{d,w}]wa^{d,w}(a^{d,w})^{(1,3)}\\
&=&[a^{d,w}]^*aw[a^{d,w}wa^{d,w}(a^{d,w})^{(1,3)}]\\
&=&[a^{d,w}]^*awx,
\end{array}$$ and then $\mathcal{A}(va^{d,w})^*\subseteq \mathcal{A}x$.Thus $\mathcal{A}x=\mathcal{A}(va^{d,w})^*$.
Therefore $a\in \mathcal{A}_{[v,w]}^{\tiny\textcircled{d}}$ by Theorem 4.1.\end{proof}

\begin{cor} Let $a\in \mathcal{A}^{d,w}$ and $v\in \mathcal{A}^{-1}$ be Hermitian. Then the following are equivalent:\end{cor}
\begin{enumerate}
\item [(1)] $a\in \mathcal{A}_{[v,w]}^{\tiny\textcircled{d}}$.
\vspace{-.5mm}
\item [(2)] There exists an idempotent $q\in \mathcal{A}$ such that $(v^2q)^*=v^2q$ and $a^{d,w}\mathcal{A}=q\mathcal{A}$.
\end{enumerate}
\begin{proof}  $(1)\Rightarrow (2)$ In view of Corollary 4.2, we have $a^{d,w}\in \mathcal{A}^{(1,3,v)}$. Then
$a^{d,w}=a^{d,w}(a^{d,w})^{(1,3,v)}va^{d,w}$ and $[va^{d,w}(a^{d,w})^{(1,3)}]^*=va^{d,w}(a^{d,w})^{(1,3)}$. Let
$q=a^{d,w}(a^{d,w})^{(1,3)}_vv$. Then $q^2=q$ and $a^{d,w}\mathcal{A}=q\mathcal{A}$.
Since $(vqv^{-1})^*=vqv^{-1}$, we have $(v^2q)^*=v^2q$, as required.

$(2)\Rightarrow (1)$ By hypothesis, there exists an idempotent $q\in \mathcal{A}$ such that $a^{d,w}\mathcal{A}=q\mathcal{A}$ and $(v^2q)^*=v^2q$.
Write $a^{d,w}=qx$ and $q=a^{d,w}yv^2$ for some $x,y\in \mathcal{A}$. Then $a^{d,w}(yv)va^{d,w}=qa^{d,w}=a^{d,w}$.
Since $(v^2q)^*=v^2q$, we see that $(vqv^{-1})^*=vqv^{-1}$. Thus $(va^{d,w}yv)^*=va^{d,w}yv$.
This implies that $a^{d,w}\in \mathcal{A}^{(1,3,v)}$. Therefore we obtain the rest by Corollary 4.2.\end{proof}

\begin{thm} Let $a\in \mathcal{A}^{d,w}$ and $v\in \mathcal{A}^{-1}$ be Hermitian. Then the following are equivalent:\end{thm}
\begin{enumerate}
\item [(1)] $a\in \mathcal{A}_{[v,w]}^{\tiny\textcircled{d}}$.
\vspace{-.5mm}
\item [(2)] $awa^{d,w}w\in \mathcal{A}^{(1,3,v)}$.
\end{enumerate}
In this case, $a_{[v,w]}^{\tiny\textcircled{d}}=a^{d,w}w(awa^{d,w}w)^{(1,3,v)}.$
\begin{proof} $(1)\Rightarrow (2)$ Set $x=a_{[v,w]}^{\tiny\textcircled{d}}$. Then
$$\begin{array}{rll}
||a^{d,w}w-xv(awa^{d,w}w||&=&||(aw)^d-xv(aw)(aw)^d||\\
&\leq&||(aw)^m-xv(aw)^{m+1}||||((aw)^d)^{m+1}||.
\end{array}$$ Since  $$\lim\limits_{m\to \infty}||(aw)^m-xv(aw)^{m+1}||^{\frac{1}{m}}=0,$$ we have
$$\lim\limits_{m\to \infty}||a^{d,w}w-xv(awa^{d,w})w||^{\frac{1}{m}}=0.$$
Hence $xvawa^{d,w}w=a^{d,w}w,$ and then $$\begin{array}{rll}
[awa^{d,w}w][awx]&=&aw[(aw)^d]^2awawx=awx,\\
(v(awa^{d,w}w)(awx))^*&=&(vawx)^*=vawx=v(awa^{d,w}w)(awx),\\
(awa^{d,w}w)(awx)v(awa^{d,w}w)&=&(awx)v(awa^{d,w}w)=(awxvaw)a^{d,w}w\\
&=&awa^{d,w}w.
\end{array}$$ Accordingly, $awa^{d,w}w\in \mathcal{A}^{(1,3,v)}$, as desired.

$(2)\Rightarrow (1)$ Let $x=a^{d,w}w(awa^{d,w}w)^{(1,3,v)}$. Then we verify that
$$\begin{array}{rll}
awx&=&awa^{d,w}w(awa^{d,w}w)^{(1,3,v)}=aw(aw)^d(aw(aw)^d)^{(1,3,v)},\\
(vawx)^*&=&vawx,\\
awxvx&=&(awx)vx=aw(aw)^d(aw(aw)^d)^{(1,3,v)}va^{d,w}w(awa^{d,w}w)^{(1,3,v)}\\
&=&aw(aw)^d(aw(aw)^d)^{(1,3,v)}v(aw)^d(awa^{d,w}w)^{(1,3,v)}\\
&=&aw(aw)^d(aw(aw)^d)^{(1,3,v)}v[(aw)(aw)^d](aw)^d(awa^{d,w}w)^{(1,3,v)}\\
&=&aw[(aw)^d]^2(awa^{d,w}w)^{(1,3,v)}\\
&=&(aw)^d(awa^{d,w}w)^{(1,3,v)}=x.
\end{array}$$

Clearly, we have $$\begin{array}{rl}
&[1-(xv)(aw)](aw)^n\\
=&[1-aw(aw)^d(aw(aw)^d)^{(1,3,v)}v](aw)^n\\
=&[1-aw(aw)^d(aw(aw)^d)^{(1,3,v)}v(aw)(aw)^d](aw)^n\\
-&aw(aw)^d(aw(aw)^d)^{(1,3,v)}v[(aw)^n-(aw)^d(aw)^{n+1}]\\
=&[1-aw(aw)^d](aw)^n-aw(aw)^d(aw(aw)^d)^{(1,3,v)}v[(aw)^n-(aw)^d(aw)^{n+1}],
\end{array}$$ and then
$$\begin{array}{rl}
&||(aw)^n-(xv)(aw)^{n+1}||^{\frac{1}{n}}\\
\leq&||1-aw(aw)^d(aw(aw)^d)^{(1,3,v)}v||^{\frac{1}{n}}||(aw)^n-(aw)^d(aw)^{n+1}||^{\frac{1}{n}}.\\
\end{array}$$ Hence, $$\lim\limits_{n\to \infty}||(aw)^n-(xv)(aw)^{n+1}||^{\frac{1}{n}}=0.$$
Accordingly, $a\in \mathcal{A}_{[v,w]}^{\tiny\textcircled{d}}$. In this case, $a_{[v,w]}^{\tiny\textcircled{d}}=a^{d,w}w(awa^{d,w}w)^{(1,3,v)}.$\end{proof}

\begin{cor} Let $a,w\in \mathcal{A}$. Then the following are equivalent:\end{cor}
\begin{enumerate}
\item [(1)] $a\in \mathcal{A}_w^{\tiny\textcircled{d}}$.
\vspace{-.5mm}
\item [(2)] $a\in \mathcal{A}^{d,w}$ and $(aw)(aw)^d\in \mathcal{A}^{(1,3)}$.
\end{enumerate}
In this case, $a^{\tiny\textcircled{d}}=(aw)^{d}[aw(aw)^{d}]^{(1,3)}.$
\begin{proof} This is obvious by choosing $v=1$ in Theorem 4.4.\end{proof}

\section{applications to complex matrices}

This section is devoted to present applications in the complex setting. Let $\mathcal{R}(X)$ represent the range space of a complex matrix $X$ and $X^*$ be the conjugate transpose of $X$. For further use, we derive

\begin{lem} Let $A,W\in {\Bbb C}^{n\times n}$ and let $V\in {\Bbb C}^{n\times n}$ be invertible. Then $A$ is $(W,V)$-pseudo core invertible if and only if $A$ has generalized $[W,V]$-core inverse. In this case, $A_{[V,W]}^{\tiny\textcircled{D}}=A_{[V,W]}^{\tiny\textcircled{d}}$.\end{lem}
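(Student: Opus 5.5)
The plan is to compare the two defining systems directly. The $(V,W)$-pseudo-core inverse is defined by
$$AWXVX=X,\quad (VAWX)^*=VAWX,\quad XV(AW)^{k+1}A=(AW)^kA,$$
and the generalized $[V,W]$-core inverse is characterized in Theorem 2.4 by
$$X=AWXVX,\quad (VAWX)^*=VAWX,\quad \lim_{n\to\infty}\|(AW)^n-XV(AW)^{n+1}\|^{1/n}=0.$$
The first two conditions coincide, so everything reduces to comparing the third conditions. The key finite-dimensional fact is that in ${\Bbb C}^{n\times n}$ every matrix is Drazin invertible, so $AW$ has Drazin inverse $(AW)^D$ with index $k$, and $(AW)^m-(AW)^D(AW)^{m+1}=0$ for all $m\ge k$. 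Also $A\in\mathcal{A}^{d,w}$ always, because $a^{d,w}=[(aw)^d]^2a$.

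($\Leftarrow$) Suppose $X=A_{[V,W]}^{\tiny\textcircled{d}}$. As in the proof of Theorem 4.1, the limit condition gives
$$XV(AW)(AW)^D=(AW)^D.$$
Multiply this on the right by $(AW)^{k+1}A$ and use $(AW)^D(AW)^{k+1}=(AW)^k$ for $k\ge \mathrm{ind}(AW)$. This yields
$$XV(AW)^{k+1}A=(AW)^kA,$$
which is the pseudo-core condition.

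($\Rightarrow$) Suppose $X$ satisfies the pseudo-core conditions. Multiplying $XV(AW)^{k+1}A=(AW)^kA$ on the right by $W$ gives
$$XV(AW)^{k+2}=(AW)^{k+1}.$$
Hence $\|(AW)^n-XV(AW)^{n+1}\|=0$ for every $n\ge k+1$, and the limit condition holds trivially. By Theorem 2.4, $A$ is then generalized $[V,W]$-core invertible with $A_{[V,W]}^{\tiny\textcircled{d}}=X$.

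Equality of the two inverses follows from uniqueness, either by Theorem 2.4 or by uniqueness of the pseudo-core inverse (Mary et al.).

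The main obstacle is the $\Leftarrow$ direction. There the limit condition has to be upgraded to an exact algebraic identity, and the step $XV(AW)(AW)^D=(AW)^D$ uses Drazin invertibility, which is automatic for matrices. A second point to check is that the pseudo-core condition carries the trailing factor $A$ instead of $W$. This only means choosing which side to multiply by $A$ or $W$, and it causes no difficulty in either direction.
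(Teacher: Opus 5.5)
Your argument is correct. The implication from pseudo-core invertibility to the generalized $[V,W]$-core inverse is the same as the paper's. You right-multiply $XV(AW)^{k+1}A=(AW)^kA$ by $W$, so that $(AW)^n-XV(AW)^{n+1}=0$ for $n\ge k+1$, and then apply Theorem 2.4.

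For the converse you take a different route from the paper.

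\textbf{Paper's route.} It invokes Theorem 3.5 to obtain $AWXVX=X$ together with the range conditions $\mathcal{R}(X)=\mathcal{R}(A^{D,W})$ and the companion condition on $X^*$. It then observes that $\mathcal{R}(A^{D,W})=\mathcal{R}((AW)^kA)$ for $k=\mathrm{ind}(AW)$. Finally it concludes from the external range characterization of the pseudo-core inverse in [M, Theorem 2.17].

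\textbf{Your route.} You upgrade the limit condition to the exact identity $XV(AW)(AW)^D=(AW)^D$, using the norm estimate from the proof of Theorem 4.1; that estimate does not use that $V$ is Hermitian. You then right-multiply by $(AW)^{k+1}A$ and use $(AW)^D(AW)^{k+1}=(AW)^k$. This verifies the defining equation $XV(AW)^{k+1}A=(AW)^kA$ directly. The other two defining conditions are already shared by both inverses.

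\textbf{Comparison.} Your version is self-contained. It does not depend on matching the paper's ideal conditions to the hypotheses of the cited theorem. It in fact proves the equivalence in any Banach $*$-algebra whenever $aw$ is Drazin invertible with finite index. The paper's version links the two inverses through their range-space descriptions, at the cost of relying on an outside result.
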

\begin{proof} $(1)\Rightarrow (1)$ Set $X=A_{[V,W]}^{\tiny\textcircled{D}}$. Then we have
$$\begin{array}{c}
AWXVX=XVAWX=X, (VAWX)^*=VAWX,\\
XV(AW)^{k+1}A=(AW)^kA, AWXV(AW)^kA=(AW)^kA
\end{array}$$ for some $k\in {\Bbb N}$. Then we verify that
$$\begin{array}{rll}
XV(AW)^{k+2}&=&[XV(AW)^{k+1}A]W\\
&=&[(AW)^{k}A]W\\
&=&(AW)^{k+1}.
\end{array}$$ Therefore $A_{[V,W]}^{\tiny\textcircled{d}}=X$, as required.

$(2)\Rightarrow (1)$ Set $X=A_{[V,W]}^{\tiny\textcircled{d}}$. In view of Theorem 3.5,  we have
$$AWXVX=X, (VAWX)^*=VAWX, R(X)=R(A^{D,W}), R(X)=R(VA^{D,W})^*.$$ Set $k=ind(AW)$. Then we have
$(AW)^k=(AW)^D(AW)^{k+1}$. Hence, we derive that
$$A^{D,W}=[(AW)^D]^2A=[(AW)^D]^2[(AW)^D]^k(AW)^{k}A,$$ and then
$$R\big(A^{D,W}\big)=R\big((AW)^kA\big).$$
According to ~\cite[Theorem 2.17]{M}, $A_{[V,W]}^{\tiny\textcircled{D}}=X$, as desired.\end{proof}

We now proceed to prove:

\begin{thm} Let $A,W\in {\Bbb C}^{n\times n}$ and let $V\in {\Bbb C}^{n\times n}$ be invertible. Then the following are equivalent:\end{thm}
\begin{enumerate}
\item [(1)] $A_{[V,W]}^{\tiny\textcircled{D}}=X$.
\vspace{-.5mm}
\item [(2)] $A=Z+Y, YWZ=0, Z\in \mathcal{A}_{[V,W]}^{\tiny\textcircled{\#}}, YW~\mbox{is nilpotent}.$
\vspace{-.5mm}
\item [(3)] There exists an idempotent $P\in {\Bbb C}^{n\times n}$ such that
$$AW+P~\mbox{is invertible}, (V^2P)^*=V^2P, P(AW)=P(AW)P~\mbox{is nilpotent}.$$
\item [(4)] $XVAWX=X, (VAWX)^*=VAWX,\mathcal{R}(X)=\mathcal{R}(X^*)=\mathcal{R}\big(V(AW)^D\big).$
\vspace{-.5mm}
\item [(5)] $A^{D,W}\in [{\Bbb C}^{n\times n}]_{[V,W]}^{\tiny\textcircled{\#}}$ and $[A^{D,W}W]^2(A^{D,W})_{[V,W]}^{\tiny\textcircled{\#}}=X$.
\vspace{-.5mm}
\item [(6)] $AWA^{D,W}W\in [{\Bbb C}^{n\times n}]^{(1,3,V)}$ and $A^{D,W}W(AWA^{D,W}W)^{(1,3,V)}=X$.
\end{enumerate}
\begin{proof} As is well known, the quasinilpotent and nilpotent complex matrix coincide with each other.
Therefore we obtain the result by Lemma 5.1, Theorem 2.2, Theorem 3.2, Theorem 4.1 and Theorem 4.4.\end{proof}

\begin{exam}\end{exam} Let $A=\left(
  \begin{array}{ccc}
    1&1&0\\
    0&0&0\\
    1&1&-1\\
  \end{array}
\right), V=\left(
  \begin{array}{cccc}
    1&0&0\\
    0&-1&1\\
    1&1&1\\
  \end{array}
\right), W=\left(
  \begin{array}{cccc}
    1&1&0\\
    0&-1&0\\
    1&0&0\\
  \end{array}
\right)\in {\Bbb C}^{3\times 3}$. We take the involution on ${\Bbb C}^{4\times 4}$ as the conjugate transpose. Set $X=\left(
\begin{array}{ccc}
    1&0&0\\
    0&0&0\\
    0&0&0\\
  \end{array}
\right)$. Then we check that
$$\begin{array}{c}
AWXVX=X, (VAWX)^*=VAWX,\\
XV(AW)^2XV=AWXV, AW=AWXVAW.
\end{array}$$
Therefore $$A_{[V,W]}^{\tiny\textcircled{D}}=\left(
  \begin{array}{ccc}
    1&0&0\\
    0&0&0\\
    0&0&0\\
  \end{array}
\right).$$ Choose $$Z=AWXVA=\left(
  \begin{array}{ccc}
    1&1&0\\
    0&0&0\\
    0&0&0\\
  \end{array}
\right), Y=A-AWXVA=\left(
  \begin{array}{ccc}
    0&0&0\\
    0&0&0\\
    1&1&-1\\
  \end{array}
\right).$$
Evidently, $$\begin{array}{c}
A=Z+Y, (VZ)^*VY=YWZ=0, YW=0,\\
Z_{[V,W]}^{\tiny\textcircled{\#}}=\left(
  \begin{array}{ccc}
    1&0&0\\
    0&0&0\\
    0&0&0\\
  \end{array}
\right).
\end{array}$$

\begin{thm} Let $A,W\in {\Bbb C}^{n\times n}$, where $(AW)^k=BC$ is a full-rank decomposition, $k=ind(AW)$ and $V$ is a Hermitian positive definite matrix.
Then $$A_{[V,W]}^{\tiny\textcircled{D}}=B(CB)^{-2}CV^{-\frac{1}{2}}[V^{\frac{1}{2}}\big(B(CB)^{-2}C\big)^{k-1}V^{-\frac{1}{2}}]^{\dag}V^{\frac{1}{2}}.$$
\end{thm}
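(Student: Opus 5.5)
The plan is to avoid computing anything from scratch. I will use the characterization in Theorem 5.2 together with the uniqueness of $A_{[V,W]}^{\tiny\textcircled{D}}=A_{[V,W]}^{\tiny\textcircled{d}}$ from Lemma 5.1 and Theorem 2.4. Concretely, I denote by $X_0$ the right-hand side of the asserted formula. It then suffices to check that $X_0$ satisfies the conditions in Theorem 5.2(4):
$$X_0VAWX_0=X_0,\quad (VAWX_0)^*=VAWX_0,\quad \mathcal{R}(X_0)=\mathcal{R}\big((AW)^D\big),\quad \mathcal{R}(X_0^*)=\mathcal{R}\big(V(AW)^D\big).$$
For the range conditions I will use the precise form of Theorem 3.5, namely $x\mathcal{A}=a^{d,w}\mathcal{A}$ and $\mathcal{A}x=\mathcal{A}(va^{d,w})^*$. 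Equivalently, I may aim at Theorem 5.2(6), writing $X_0$ as $(AW)^D$ times a $(1,3,V)$-inverse built from a matrix with the same range as $AW(AW)^D$.

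Write $M=AW$ and $G=B(CB)^{-2}C$. The first step collects the standard full-rank facts. Since $k=\mathrm{ind}(M)$, the matrix $CB$ is invertible, and $G=(M^k)^{\#}=(M^D)^k$. Also $M^D=M^{k-1}G$, $MM^D=GM^k=BB^{\dagger}$-type projector onto $\mathcal{R}(B)$, and $\mathcal{R}(G^j)=\mathcal{R}(B)=\mathcal{R}(M^D)$, $\mathcal{N}(G^j)=\mathcal{N}(C)$ for every $j\ge1$. In particular $G^{k-1}$ and $MM^D$ have the same range and null space. This is why $G^{k-1}$ may replace $AWA^{D,W}W=MM^D$ in Theorem 4.4 / Theorem 5.2(6).

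The second step handles the weight. Put $S=V^{\frac12}G^{k-1}V^{-\frac12}$ and $Q=V^{-\frac12}SS^{\dagger}V^{\frac12}$. Then $Q$ is an idempotent with $\mathcal{R}(Q)=\mathcal{R}(G^{k-1})=\mathcal{R}(B)$. Moreover $VQ$, or $V^2Q$ depending on the normalization, is Hermitian because $SS^\dagger$ is an orthogonal projector and $V^{\frac12}$ is Hermitian. Hence $Q$ is exactly the idempotent $q$ of Corollary 4.3 with $q\mathcal{A}=a^{d,w}\mathcal{A}$. I will then rewrite
$$X_0=GV^{-\frac12}S^{\dagger}V^{\frac12},$$
and observe that $MX_0V$ reduces to a multiple of $Q$ on $\mathcal{R}(B)$. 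The key identity needed is $M G V^{-\frac12}S^{\dagger}=V^{-\frac12}S^{\dagger}$ on the relevant subspace, using $MG=M^D$-type relations and $S^{\dagger}=S^{\dagger}SS^{\dagger}$. From this, $(VMX_0)^*=VMX_0$ and $X_0VMX_0=X_0$ follow by direct substitution. The range conditions follow since $\mathcal{R}(X_0)\subseteq\mathcal{R}(G)=\mathcal{R}(M^D)$, with equality because $X_0VM$ acts as the identity on $\mathcal{R}(M^D)$. The row-space condition follows from the Hermitian property in the same way as in the proof of Theorem 3.5.

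The main obstacle is this last bookkeeping. The powers of $G$ must match: $G$ equals $(M^D)^k$ rather than $M^D$, so a factor $G^{k-1}$ inside the pseudoinverse has to compensate for the discrepancy between $G$ and $M^D$ on $\mathcal{R}(B)$. The half-powers of $V$ must also combine into the correct Hermitian condition, which is $(VAWX)^*=VAWX$ versus $(V^2P)^*=V^2P$ in Theorem 3.2. I would first test the identity $MX_0V=Q$ restricted to $\mathcal{R}(B)$ in the invertible case $k=1$. If the exponents do not align, I would reduce everything to the invertible matrix $CB$, via $X_0=B(\cdot)C\cdots$, and verify there.
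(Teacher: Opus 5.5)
There is a genuine gap, and it cannot be closed, because the displayed formula is false. The step that fails is your ``key identity'' $MGV^{-\frac12}S^{\dagger}=V^{-\frac12}S^{\dagger}$ on $\mathcal{R}(B)$. You correctly observed that $G=B(CB)^{-2}C=(M^{k})^{\#}=(M^{D})^{k}$, while $M^{D}=M^{k-1}G$. However, the factor $G^{k-1}$ inside the pseudoinverse does not compensate for this. Take $V=W=I$, so that the left-hand side is the core-EP inverse $A^{D}BB^{\dagger}$. For $k\ge 2$ one has $G=(G^{\#})^{k-2}G^{k-1}$ with $G^{\#}=A^{k}$, so the right-hand side equals $A^{k(k-2)}BB^{\dagger}$ (with $A^{0}=I$). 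Concretely, take
\[
A=\begin{pmatrix}2&0&0\\0&0&1\\0&0&0\end{pmatrix},\qquad V=W=I_3 .
\]
Then $k=2$, $A^{2}=4e_1e_1^{T}=BC$ with $B=e_1$, $C=4e_1^{T}$, and $G=\frac14e_1e_1^{T}$. The right-hand side is $GG^{\dagger}=e_1e_1^{T}$, but $A(e_1e_1^{T})^{2}=2e_1e_1^{T}\neq e_1e_1^{T}$. So $AWXVX=X$ fails; the true inverse is $\frac12e_1e_1^{T}$. For $k=1$ the formula reduces to $G=(AW)^{\#}$, because $G^{0}=I$. For example, $A=\begin{pmatrix}1&1\\0&0\end{pmatrix}$ with $V=W=I$ gives $G=A$, and $AG=A$ is not Hermitian. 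In that case your claim $\mathcal{R}(Q)=\mathcal{R}(G^{k-1})=\mathcal{R}(B)$ also fails, since $Q=I$.

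The weight is a second, independent mismatch. The defining conditions force $AWXV$ to be an idempotent onto $\mathcal{R}((AW)^{k})$ with $V^{2}AWXV=V(VAWX)V$ Hermitian; this is the condition $(v^{2}q)^{*}=v^{2}q$ of Corollary 4.3. Your $Q=V^{-\frac12}SS^{\dagger}V^{\frac12}$ makes $VQ=V^{\frac12}SS^{\dagger}V^{\frac12}$ Hermitian. It does not make $V^{2}Q$ Hermitian unless $V$ commutes with $SS^{\dagger}$, so ``$VQ$ or $V^{2}Q$ depending on the normalization'' is not a bookkeeping choice. Solving the defining equations directly gives the unique solution
\[
X=(AW)^{D}B(B^{*}V^{2}B)^{-1}B^{*}V=(AW)^{k-1}B(CB)^{-1}(B^{*}V^{2}B)^{-1}B^{*}V,
\]
which differs from the stated expression. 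The paper's own proof breaks at exactly the points you flagged as obstacles:
\begin{enumerate}
\item It uses $(AW)^{D}=(BC)^{\#}=B(CB)^{-2}C$, which is valid in general only for $k=1$.
\item It then uses $BC\,G^{k}=G^{k-1}$, which is false for $k=1$, where $BC\,G=B(CB)^{-1}C$.
\item It replaces the $(1,3,V)$-inverse of $E=AW(AW)^{D}$, which requires $EZVE=E$, by $V^{-\frac12}(V^{\frac12}EV^{-\frac12})^{\dagger}V^{\frac12}$, which satisfies $EZE=E$ instead.
\end{enumerate}
So neither your argument nor the paper's establishes the statement as written.
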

\begin{proof} Set $k=ind(AW)$. Then $(AW)^k$ has group inverse, and so $CB$ is invertible. Hence,
we have $(AW)^D=\big((AW)^k\big)^{\#}=(BC)^{\#}=B(CB)^{-2}C$. Since $V$ is a Hermitian positive definite matrix,
it follows by ~\cite[Theorem 5]{KD} that
$$[(AW)(AW)^D]_{V,V}^{\dag}=V^{-\frac{1}{2}}[V^{\frac{1}{2}}AW(AW)^DV^{-\frac{1}{2}}]^{\dag}V^{\frac{1}{2}}.$$
By virtue of Theorem 5.2, we have
$$\begin{array}{rll}
A_{[V,W]}^{\tiny\textcircled{D}}&=&A^{D,W}W(AWA^{D,W}W)^{(1,3,V)}\\
&=&(AW)^{D}[AW(AW)^D]^{(1,3,V)}\\
&=&(AW)^D[(AW)(AW)^D]_{V,V}^{\dag}\\
&=&(AW)^DV^{-\frac{1}{2}}[V^{\frac{1}{2}}AW(AW)^DV^{-\frac{1}{2}}]^{\dag}V^{\frac{1}{2}}\\
&=&B(CB)^{-2}CV^{-\frac{1}{2}}[V^{\frac{1}{2}}(AW)^k\big((AW)^D\big)^kV^{-\frac{1}{2}}]^{\dag}V^{\frac{1}{2}}\\
&=&B(CB)^{-2}CV^{-\frac{1}{2}}[V^{\frac{1}{2}}BC\big(B(CB)^{-2}C\big)^kV^{-\frac{1}{2}}]^{\dag}V^{\frac{1}{2}}\\
&=&B(CB)^{-2}CV^{-\frac{1}{2}}[V^{\frac{1}{2}}\big(B(CB)^{-2}C\big)^{k-1}V^{-\frac{1}{2}}]^{\dag}V^{\frac{1}{2}}.
\end{array}$$ as asserted.\end{proof}

\begin{exam}\end{exam} Let $A=\left(
  \begin{array}{ccc}
    1&i&0\\
    0&0&1\\
    0&0&0\\
  \end{array}
\right), V=\left(
  \begin{array}{ccc}
    1&0&0\\
    0&4&0\\
    0&0&1\\
  \end{array}
\right), W=\left(
  \begin{array}{ccc}
    1&i&0\\
    0&1&0\\
    0&0&1\\
  \end{array}
\right)\in {\Bbb C}^{3\times 3}$. We take the involution on ${\Bbb C}^{3\times 3}$ as the conjugate transpose. Then $ind(AW)=2$.
Then $$AW=\left(
  \begin{array}{ccc}
    1&i&0\\
    0&0&1\\
    0&0&0\\
  \end{array}
\right).$$ Clearly, we have a full-rank decomposition of $(AW)^2$:
$$(AW)^2=\left(
  \begin{array}{ccc}
    1&i&i\\
    0&0&0\\
    0&0&0\\
  \end{array}
\right)=BC, B=\left(
\begin{array}{c}
1\\
0\\
0
\end{array}
\right), C=(1,i,i).$$

$$V^{\frac{1}{2}}=\left(
  \begin{array}{ccc}
    1&0&0\\
    0&2&0\\
    0&0&1\\
  \end{array}
\right), V^{-\frac{1}{2}}=\left(
  \begin{array}{ccc}
    1&0&0\\
    0&\frac{1}{2}&0\\
    0&0&1\\
  \end{array}
\right).$$

$$\begin{array}{rl}
&V^{\frac{1}{2}}\big(B(CB)^{-2}C\big)^{k-1}V^{-\frac{1}{2}}\\
=&\left(
  \begin{array}{cccc}
    1&0&0\\
    0&2&0\\
    0&0&1\\
  \end{array}
\right)\left(
  \begin{array}{cccc}
    1&i&i\\
    0&0&0\\
    0&0&0\\
  \end{array}
\right)\left(
  \begin{array}{cccc}
    1&0&0\\
    0&\frac{1}{2}&0\\
    0&0&1\\
  \end{array}
\right)\\
=&\left(
  \begin{array}{cccc}
    1&\frac{1}{2}i&i\\
    0&0&0\\
    0&0&0\\
  \end{array}
\right).
\end{array}$$
Then we compute that $$\left(
  \begin{array}{cccc}
    1&\frac{1}{2}i&i\\
    0&0&0\\
    0&0&0\\
  \end{array}
\right)^{\dag}=\left(
  \begin{array}{cccc}
    \frac{4}{9}&0&0\\
    -\frac{2}{9}i&0&0\\
    -\frac{4}{9}i&0&0\\
  \end{array}
\right).$$
By virtue of Theorem 5.4, we have
$$\begin{array}{rll}
A_{[V,W]}^{\tiny\textcircled{D}}&=&\left(
  \begin{array}{cccc}
    1&i&i\\
    0&0&0\\
    0&0&0\\
  \end{array}
\right)\left(
  \begin{array}{ccc}
    1&0&0\\
    0&\frac{1}{2}&0\\
    0&0&1\\
  \end{array}
\right)\left(
  \begin{array}{cccc}
    \frac{4}{9}&0&0\\
    -\frac{2}{9}i&0&0\\
    -\frac{4}{9}i&0&0\\
  \end{array}
\right)\left(
  \begin{array}{ccc}
    1&0&0\\
    0&2&0\\
    0&0&1\\
  \end{array}
\right)\\
&=&\left(
  \begin{array}{cccc}
    \frac{5-2i}{9}&0&0\\
    0&0&0\\
    0&0&0\\
  \end{array}
\right).
\end{array}$$

\vskip10mm
{\bf Disclosure statements}\\

No potential conflict of interest was reported by the authors.\\

\end{document}